\newcommand{\psh}[2]{\ensuremath{\langle #1,#2\rangle}\xspace}
\DeclareMathAlphabet{\pazocal}{OMS}{zplm}{m}{n}
\DeclareMathOperator{\E}{\mathbb{E}}
\DeclareMathOperator{\Tr}{\text{Tr}}
\DeclareMathOperator*{\argmin}{arg\,min}
\theoremstyle{plain}
\newtheorem{thrm}{Theorem}[section]
\newtheorem{lemma}[thrm]{Lemma}
\newtheorem{proposition}[thrm]{Proposition}
\theoremstyle{definition}
\newtheorem{definition}[thrm]{Definition}
\newtheorem{rem}[thrm]{Remark}
\newtheorem{assumption}[thrm]{Assumption}
\theoremstyle{plain}
\numberwithin{thrm}{section}
\numberwithin{equation}{section}
\begin{document}
\title{Well-posedness of stochastic port-Hamiltonian systems on infinite-dimensional spaces \footnote{This research was conducted with the financial support of F.R.S-FNRS. Fran\c{c}ois Lamoline is a FRIA Research Fellow under the grant F 3/5/5-MCF/BC. This paper presents research results of the Belgian Programme on Interuniversity Poles of Attraction, initiated by the Belgian State, the Prime Minister's Office for Science, Technology and Culture. The scientific responsibility rests with its authors.}}
%
\author{ \name{Fran\c{c}ois Lamoline and Joseph J. Winkin
\footnote{Fran\c{c}ois Lamoline and Joseph J. Winkin are with the University of Namur, Department of Mathematics and Namur Institute for Complex Systems (naXys), Rempart de la Vierge 8, B-5000 Namur, Belgium; email:  francois.lamoline@unamur.be \& joseph.winkin@unamur.be}}} 
         
%
%
%

%
%
\date{}
\abstract{
Stochastic port-Hamiltonian systems on infinite-dimensional spaces governed by It\^o stochastic differential equations (SDEs) are introduced and some properties of this new class of systems are studied. They are an extension of stochastic port-Hamiltonian systems defined on a finite-dimensional state space. The concept of well-posedness in the sense of Weiss and Salamon is generalized to the stochastic context. Under this extended definition, stochastic port-Hamiltonian systems are shown to be well-posed. The theory is illustrated on an example of a vibrating string subject to a Hilbert space-valued Gaussian white noise process.
}
\keywords{Infinite-dimensional system - Port-Hamiltonian system - Well-posedness - Stochastic partial differential equation - Stochastic wave equation}
%
\maketitle

\section*{Introduction}
Distributed parameter linear port-controlled Hamiltonian systems were introduced in \cite{gorrec,vander}. Since then, they have been proved to be an efficient framework for modeling and control of partial differential equations (PDEs), such as transmission lines, flexible beams and tubular reactors for instance. This class of systems covers a large range of control systems with actuators and sensors located at the extremities of the spatial domain. Boundary control and observation lead to unbounded operators and thus to technical difficulties such as the definition of well-defined solution and output. However, so far, many boundary control systems have been proved to be well-posed even without having bounded control and observation operators. For an overview of the literature on the well-posedness of boundary control systems, one may be referred to \cite{staffan,Weiss2001, tucsnak}. The well-posedness of the wave equation is treated in \cite{zhang, zhang2}.

From \citep{wellposed}, it is known that, for deterministic port-Hamiltonian systems on a one-dimensional spatial domain, as for the bounded case, the well-posedness can be deduced from the generation of a $C_0$-semigroup in the homogeneous case $(u=0)$. In other words, the port-Hamiltonian modeling allows to have the simplest verification of the PDEs' well-posedness, which is usually very hard to obtain for more general PDEs.     

Various disturbances such as modeling inaccuracies or environment disturbances can occur when real plants are to be controlled. Examples of disturbances are wind gusts, environment turbulences, unpredictable fluctuation in the line voltage, fluctuations of the environment temperature or reaction parameters uncertainty. In order to capture the nature of these neglected effects, stochastic port-Hamiltonian systems (SPHSs) are introduced as the stochastic extension of linear port-controlled Hamiltonian systems by considering system's noise. From a denominational point a view, port-Hamiltonian systems not subject to any disturbance will be called deterministic port-Hamiltonian systems and those subject to disturbances will be called stochastic port-Hamiltonian systems. On finite-dimensional spaces, stochastic port-Hamiltonian systems were defined in \citep{fujimoto} as the stochastic extension of deterministic port-Hamiltonian systems defined in \citep{maschke}. In \citep{fujimoto}, Satoh and Fujimoto depicted the performance degradation and the possible non-stabilization of control systems resulting from stochastic disturbances by considering the problem of controlling a rolling coin on a horizontal plane.\\    
To the best of our knowledge, no stochastic extension of port-Hamiltonian systems on infinite-dimensional spaces with boundary control and observation is already available in the literature.

Linear well-posed systems in the sense of Salamon \cite{salomon} were introduced to deal with systems with boundary control and observation operators. This class of systems is also known to enjoy many useful properties (see e.g. \cite{staffan}) involving feedback control, dynamic stabilization, and tracking/disturbance rejection. It is the main motivation to generalize the well-posedness to stochastic partial differential equations and more specifically, to extend well-posedness results of deterministic port-Hamiltonian systems to stochastic ones. To do so, the semigroup approach will be preferred to the variational one. As a good starting point, interested readers may be referred to \cite{pritchard, curtain, daprato} and the references therein. A brief appendix resuming results from the theory of stochastic integration of deterministic functions in Hilbert spaces relevant to the paper is included for the reader's convenience, see Appendix \ref{appendixA}.\\
As far as known, there are not as many references devoted to stochastic well-posed systems as for the deterministic case, see \cite{Weiss2001,Tuscnakauto,zhang,zhang2,staffan, wellposed}. One may be referred to \citep{Lu15} where a generalization of well-posed linear systems to the stochastic context is done by providing a formulation of stochastic well-posed linear systems and some basic properties. This paper falls in line with \citep{Lu15} as an attempt to fill the blank left in the literature. When compared with \citep{Lu15}, here the system's noise is assumed to be additive and modelized as an infinite-dimensional Wiener process. Furthermore, we focus on the class of PDEs that falls within the port-Hamiltonian framework and thus, use the existing results of the deterministic case \citep{wellposed}.

Built on \citep{Lamolinecdc}, where preliminary results are reported, the first main contribution of this paper is to provide the stochastic counterpart of the port-controlled Hamiltonian systems defined in \citep{gorrec} with additive system's noise and to describe them as boundary controlled and observed stochastic systems. The existence and the uniqueness of weak and strong solutions of SPHSs are studied and some of their properties are investigated. The second main contribution is the extension of the results of \citep{wellposed} to the stochastic context, while \citep{Lamolinecdc} focuses on the passivity property of SPHSs.

This paper is organized as follows. In Section \ref{SPHS_section} the class of systems under study is introduced, namely stochastic port-Hamiltonian systems on infinite-dimensional spaces. In Section \ref{weak} the existence and the uniqueness of strong and weak solutions are established. In Section \ref{stochasticwellposed_section} results concerning the well-posedness of deterministic port-Hamiltonian systems are briefly recalled and the extension of these results to the stochastic case is presented. The paper ends with the study case of a vibrating string subject to a space and time Gaussian white noise process by means of the Riesz-spectral property of a subclass of stochastic port-Hamiltonian systems. 
\section{Stochastic port-Hamiltonian system}
\label{SPHS_section}
Let us consider the Hilbert space $Z$ and the probability space $(\Omega, \mathcal{F},\mathbb{P})$ with a complete right-continuous filtration $\mathbb{F}:=(\mathcal{F}_t)_{t\geq 0}$ and a spatial-dependent stochastic process $\varepsilon(\omega,t)\in L^2([a,b];\mathbb{K}^n)$ with $\omega\in \Omega$ and $t\in[0,T]$ ($\mathbb{K}$ denotes the field of real or complex numbers). Hereinafter, the state space $L^2([a,b];\mathbb{K}^n)$ will be denoted by $\mathcal{X}$.
\begin{definition}
A first order stochastic port-Hamiltonian system is governed by the stochastic partial differential equation (SPDE)
\begin{equation}
\dfrac{\partial \varepsilon}{\partial t}(\zeta ,t) = P_1 \dfrac{\partial}{\partial \zeta}(\mathcal{H}(\zeta)\varepsilon(\zeta ,t)) + P_0 \mathcal{H}(\zeta)\varepsilon(\zeta ,t) + (H \eta(t))(\zeta),
\label{spde}
\end{equation}
where $P_1 \in \mathbb{K}^{n\times n}$ is invertible and self-adjoint ($P_1^{*} = P_1$), $P_0 \in \mathbb{K}^{n\times n}$ is skew-adjoint ($P_0^{*} = -P_0$)  and $\mathcal{H}\in L^{\infty}([a,b];\mathbb{K}^{n\times n})$ is self-adjoint and satisfies $mI\leq \mathcal{H}(\zeta)\leq MI$ for all $\zeta\in [a,b]$, for some constants $m$, $M>0$. The system's noise $\eta:\Omega \times [0,T] \rightarrow Z$ is a Gaussian white noise process and $H\in\pazocal{L}(Z,\mathcal{X})$ is the intensity of $\eta$.\\
The associated Hamiltonian $E:\mathcal{X}\rightarrow \mathbb{K}$, which describes the total energy of the system, is given by 
\begin{equation}
E(\varepsilon(t)) = \frac{1}{2}\int_a^b \varepsilon^*(\zeta,t) \mathcal{H}(\zeta) \varepsilon(\zeta,t) d\zeta,
\label{Hamiltonian}
\end{equation} 
where $\varepsilon(t)$ denotes the function $\zeta \to \varepsilon(\zeta,t)$. The Hamiltonian is assumed to be sufficiently smooth. 
\label{pshspde}
\end{definition}
The expected stored energy will be given by  
\begin{equation}
\bar{E}(\varepsilon(t)):= \frac{1}{2}\E \int_a^b \varepsilon^*(\zeta,t) \mathcal{H}(\zeta) \varepsilon(\zeta,t) d\zeta = \frac{1}{2} \int_a^b \Tr\left[ \mathcal{H}(\zeta) \E\left[\varepsilon(\zeta,t) \varepsilon(\zeta,t)^* \right] \right] d\zeta,
\end{equation}
using the calculation $\E\left[\varepsilon(\zeta,t)^* \mathcal{H}(\zeta) \varepsilon(\zeta,t) \right] = \E\left[\Tr\left[\mathcal{H}(\zeta) \varepsilon(\zeta,t) \varepsilon(\zeta,t)^* \right] \right] = \Tr\left[\mathcal{H}(\zeta) \right.$ $\left.\E\left[\varepsilon(\zeta,t) \varepsilon^*(\zeta,t) \right]\right]$
since the trace of a product is invariant under cyclic permutations.\\
The Hilbert space $L^2([a,b];\mathbb{K}^n)$ is equipped with the inner product
\begin{equation}
\psh{\varepsilon_1}{\varepsilon_2}_{\mathcal{X}} = \frac{1}{2} \int_a^b \varepsilon_2^*(\zeta) \mathcal{H}(\zeta) \varepsilon_1(\zeta)d\zeta,\qquad \varepsilon_1,\varepsilon_2\in L^2([a,b];\mathbb{K}^n),
\end{equation}
which induces the norm $\|\cdot\|_\mathcal{X}=\sqrt{\psh{\cdot}{\cdot}}_{\mathcal{X}}$. We make this choice of norm in order to have $\|\cdot\|_\mathcal{X}$ representing the energy of the vector signal. Since $mI\leq \mathcal{H}(\zeta)\leq MI$ for all $\zeta\in [a,b]$, the norm $\|\cdot\|_\mathcal{X}$ is equivalent to the usual $L^2$-norm.\\
Let us introduce the following spaces:
\begin{equation*}
\begin{split}
L^2_{\mathbb{F}}([0,T];L^2(\Omega;\mathcal{X})):= 
\lbrace \varepsilon:\Omega \times [0,T] \rightarrow \mathcal{X} : \varepsilon(\cdot) \text{  is  } \mathbb{F}-\text{adapted and} \E \int_0^T \| \varepsilon(s)\| _\mathcal{X}^2 ds <\infty  \rbrace
\end{split}
\end{equation*}
endowed with the norm $\|\varepsilon\|^2_{L_{\mathbb{F}}^2([0,T]\times \Omega;\mathcal{X})}:=\E\int_0^T \| \varepsilon(s)\| _\mathcal{X}^2 ds$,
\begin{equation*}
\begin{split}
C^2_{\mathbb{F}}([0,T];L^2(\Omega;\mathcal{X})):= \lbrace \varepsilon: \Omega \times [0,T] \rightarrow \mathcal{X} : \varepsilon(\cdot) \text{  is  } \mathbb{F}-\text{adapted and} \E \| \varepsilon(s)\| _\mathcal{X}^2, \E\|\dot{\varepsilon}(s)\|_\mathcal{X}^2 \text{  are continuous}  \rbrace
\end{split}
\end{equation*}
endowed with the norm $\|\varepsilon\|^2_{C_{\mathbb{F}}([0,T];L^2(\Omega;\mathcal{X}))}:=\displaystyle \sup_{t\in [0,T]} \E \| \varepsilon(t)\|^2_\mathcal{X}$ and 
\begin{equation*}
\begin{split}
M_\mathbb{F}^2([0,T];Z):= \lbrace M:[0,T]\rightarrow Z : M(\cdot) \text{	is a continuous	} \mathbb{F}-\text{adapted martingale}, M(0)=0 \text{	and	}\\ \displaystyle \sup_{t\in [0,T]}\E \|M(t)\|^2_Z <\infty \rbrace
\end{split}
\end{equation*}
endowed with norm $\|M\|^2_{M_T^2}:= \displaystyle \sup_{t\in [0,T]} \E \|M(t)\|^2_Z$, which is a Banach space. In order to simplify the notations we denote the first two spaces by $L^2_\mathbb{F}([0,T];\mathcal{X})$ and $C^2_\mathbb{F}([0,T];\mathcal{X})$ respectively.\\

To the SPDE (\ref{spde}), we associate controlled and homogeneous boundary conditions given by 
\begin{align*}
u(t) = W_{B,1} \left[
\begin{array}{c}
f_\partial(t) \\
e_\partial(t)
\end{array}
\right],\qquad
0 = W_{B,2} \left[
\begin{array}{c}
f_\partial(t) \\
e_\partial(t)
\end{array}
\right], 
\end{align*}
where 
\begin{align}
e_{\partial} &= \frac{1}{\sqrt{2}}((\mathcal{H}\varepsilon)(b) + (\mathcal{H}\varepsilon)(a)),\label{effort}\\
f_{\partial} &= \frac{1}{\sqrt{2}} (P_1 (\mathcal{H}\varepsilon)(b) - P_1 (\mathcal{H}\varepsilon)(a))\label{flow}
\end{align}
are said to be the boundary effort and the boundary flow, respectively, and $W_B:= \begin{scriptsize}
\left[
\begin{array}{c}
W_{B,1} \\
W_{B,2}
\end{array}
\right]
\end{scriptsize} \in \mathbb{K}^{n\times 2n}$. The input $u(t)$ is a $\mathbb{K}^m$-valued $\mathbb{F}$-adapted stochastic process.\\
One will usually model Gaussian white noise disturbances by It\^o stochastic integrals with respect to a Wiener process, see \cite{curtain_article} and \cite{pritchard}. Let us define the operator \begin{equation}
A\varepsilon:= P_1 \dfrac{d}{d\zeta}(\mathcal{H} \varepsilon) + P_0 \mathcal{H} \varepsilon
\label{operator}
\end{equation} on the domain 
\begin{equation}
D(A)= \left\lbrace \varepsilon\in \mathcal{X} : \mathcal{H}\varepsilon\in H^{1}([a,b];\mathbb{K}^n) , W_B 
\left[
 \begin{array}{c}
 f_{\partial}\\
 e_{\partial}
\end{array}
\right]
=0 \right\rbrace.
\label{domain}
\end{equation}
The SPDE (\ref{spde}) can then be rewritten as a stochastic differential equation (SDE) under the It\^o form on the functional state space $\mathcal{X}$:
\begin{equation}
d\varepsilon(t) = A \varepsilon(t) dt + H dw(t),
\label{ito} 
\end{equation}
where $w(t)$ is a Wiener process on a separable Hilbert space $Z$ with covariance operator $Q\in\pazocal{L}(Z)$ and intensity $H\in\pazocal{L}(Z,\mathcal{X})$, see Definition \ref{wienerprocessdefi}. Here the operator $Q\in\pazocal{L}(Z)$ is assumed to be symmetric, nonnegative and to satisfy $\Tr[Q]<\infty$, where $\Tr$ denotes the trace operator of $Q$.\\ 
The following result, establishing the generation of a $C_0$-semigroup for deterministic PHSs, will be useful for our study of the well-posedness of stochastic port-Hamiltonian systems. 
\begin{thrm}
\citep[Theorem 7.2.4]{zwart}\\
Consider the operator $A$ with domain $D(A)$ given by (\ref{operator})-(\ref{domain}). Assume that $W_B$ is a $n\times 2n$ matrix of full rank. Then $A$ is the generator of a contraction $C_0$-semigroup on $\mathcal{X}$ if and only if $W_B \Sigma W_B^*\geq 0$ where 
$\Sigma=\begin{scriptsize} \left[
 \begin{array}{cc}
 0 & I\\
I & 0
\end{array}
\right] \end{scriptsize} \in \mathbb{K}^{2n\times 2n}$. 
\label{generation}
\end{thrm}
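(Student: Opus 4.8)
The plan is to invoke the Lumer--Phillips theorem: it suffices to show that $A$ is densely defined and dissipative on $\mathcal{X}$ and that $\mathrm{ran}(I-A)=\mathcal{X}$, whence $A$ generates a contraction $C_0$-semigroup; conversely, a generator of a contraction semigroup is automatically dissipative, so the whole ``if and only if'' reduces to matching the sign condition $W_B\Sigma W_B^*\geq 0$ with dissipativity of $A$. The key identity is an integration by parts. Putting $x:=\mathcal{H}\varepsilon$, one has $x\in H^1([a,b];\mathbb{K}^n)$ for $\varepsilon\in D(A)$, so $x$ has a continuous representative and the traces $x(a),x(b)$, hence $f_\partial,e_\partial$, are well defined; using $P_1^*=P_1$, $P_0^*=-P_0$ and $\mathcal{H}^*=\mathcal{H}$ pointwise, the $P_0$-terms cancel and
\[
\psh{A\varepsilon}{\varepsilon}_{\mathcal{X}}+\psh{\varepsilon}{A\varepsilon}_{\mathcal{X}}
=\frac12\int_a^b\frac{d}{d\zeta}\bigl(x^* P_1 x\bigr)\,d\zeta
=\frac12\bigl(x(b)^* P_1 x(b)-x(a)^* P_1 x(a)\bigr).
\]
A direct computation with (\ref{effort})--(\ref{flow}) --- equivalently, the change of boundary variables $\left[\begin{smallmatrix}f_\partial\\ e_\partial\end{smallmatrix}\right]=\tfrac1{\sqrt2}\left[\begin{smallmatrix}P_1&-P_1\\ I&I\end{smallmatrix}\right]\left[\begin{smallmatrix}x(b)\\ x(a)\end{smallmatrix}\right]$, invertible since $P_1$ is, which conjugates $\left[\begin{smallmatrix}P_1&0\\ 0&-P_1\end{smallmatrix}\right]$ into $\Sigma$ --- shows the right-hand side equals $\tfrac12\left[\begin{smallmatrix}f_\partial\\ e_\partial\end{smallmatrix}\right]^*\Sigma\left[\begin{smallmatrix}f_\partial\\ e_\partial\end{smallmatrix}\right]$. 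Hence $2\,\mathrm{Re}\,\psh{A\varepsilon}{\varepsilon}_{\mathcal{X}}=\tfrac12\left[\begin{smallmatrix}f_\partial\\ e_\partial\end{smallmatrix}\right]^*\Sigma\left[\begin{smallmatrix}f_\partial\\ e_\partial\end{smallmatrix}\right]$ for every $\varepsilon\in D(A)$.

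Two auxiliary facts do the rest. First, on the maximal domain $\{\varepsilon\in\mathcal{X}:\mathcal{H}\varepsilon\in H^1\}$ the boundary map $\varepsilon\mapsto\left[\begin{smallmatrix}f_\partial\\ e_\partial\end{smallmatrix}\right]$ is onto $\mathbb{K}^{2n}$ (invert the $2\times2$ block matrix above to prescribe the endpoint values of $x$, take $x$ affine with those values, and set $\varepsilon:=\mathcal{H}^{-1}x$); consequently $\{\left[\begin{smallmatrix}f_\partial\\ e_\partial\end{smallmatrix}\right]:\varepsilon\in D(A)\}$ is exactly $\ker W_B$, and $D(A)$ is dense in $\mathcal{X}$ since it contains $\mathcal{H}^{-1}\phi$ for all $\phi\in C_c^\infty((a,b);\mathbb{K}^n)$ and $\mathcal{H}^{-1}$ is boundedly invertible. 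Second, the purely finite-dimensional lemma: for a full-rank $W_B\in\mathbb{K}^{n\times 2n}$, one has $W_B\Sigma W_B^*\geq 0$ if and only if $z^*\Sigma z\leq 0$ for every $z\in\ker W_B$; this can be shown by writing the orthogonal projection $\Pi=I-W_B^*(W_B W_B^*)^{-1}W_B$ onto $\ker W_B$, verifying that $z^*\Sigma z=z^*(\Pi\Sigma\Pi)z$ for $z\in\ker W_B$, and comparing the sign of $\Pi\Sigma\Pi$ with that of $W_B\Sigma W_B^*$.

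For the ``if'' part, assume $W_B\Sigma W_B^*\geq 0$. The lemma together with the energy balance gives $\mathrm{Re}\,\psh{A\varepsilon}{\varepsilon}_{\mathcal{X}}\leq 0$ on $D(A)$, i.e.\ $A$ is dissipative. For the range condition, given $g\in\mathcal{X}$ the equation $(I-A)\varepsilon=g$ reduces, with $x=\mathcal{H}\varepsilon$, to the linear first-order ODE $x'=P_1^{-1}(\mathcal{H}^{-1}-P_0)x-P_1^{-1}g$ with $L^\infty$ coefficients, whose solutions form an $n$-parameter family indexed by $x(a)$ through the (invertible) Carathéodory transition matrix, and they lie in $H^1$ because $x$ is continuous and $g\in L^2$; imposing $W_B\left[\begin{smallmatrix}f_\partial\\ e_\partial\end{smallmatrix}\right]=0$ yields an $n\times n$ linear system for $x(a)$ that must be nonsingular, for a nonzero kernel vector would produce $0\neq\varepsilon_0\in D(A)$ with $A\varepsilon_0=\varepsilon_0$, contradicting dissipativity. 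Hence $\mathrm{ran}(I-A)=\mathcal{X}$ and Lumer--Phillips yields the contraction $C_0$-semigroup.

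For the ``only if'' part, a generator of a contraction $C_0$-semigroup is dissipative, so the energy balance and the surjectivity of the boundary map force $z^*\Sigma z\leq 0$ for every $z\in\ker W_B$, and the lemma gives $W_B\Sigma W_B^*\geq 0$. The step I expect to be the real obstacle is the finite-dimensional lemma: since $\Sigma$ is indefinite of signature $(n,n)$ and $W_B\Sigma W_B^*$ may be singular, the sign equivalence cannot be obtained by naive inversion and needs the projection argument above (or an explicit completion of $W_B$ to a suitable full-rank $2n\times2n$ transformation); everything else is integration by parts or routine ODE and semigroup bookkeeping.
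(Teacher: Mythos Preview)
The paper does not prove this theorem; it is quoted as \citep[Theorem 7.2.4]{zwart} and used as a black box throughout, so there is no in-paper argument to compare against. Your outline is essentially the standard proof from that reference: derive the energy balance
\[
2\,\mathrm{Re}\,\psh{A\varepsilon}{\varepsilon}_{\mathcal{X}}=\tfrac12\begin{bmatrix}f_\partial\\e_\partial\end{bmatrix}^{*}\Sigma\begin{bmatrix}f_\partial\\e_\partial\end{bmatrix}
\]
by integration by parts, reduce dissipativity on $D(A)$ to the sign of $\Sigma$ on $\ker W_B$ via surjectivity of the boundary trace, and close with Lumer--Phillips after checking the range condition through the first-order ODE. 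The energy identity, the density of $D(A)$, the surjectivity of the trace map, and the range argument (nontrivial kernel $\Rightarrow$ eigenvalue $1$ $\Rightarrow$ contradiction with dissipativity) are all correct.

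The one place that is not yet a proof is precisely the point you flag: the equivalence $W_B\Sigma W_B^{*}\geq0 \Leftrightarrow z^{*}\Sigma z\leq0$ for all $z\in\ker W_B$. Your projection sketch does not close this gap. Writing $\Pi=I-W_B^{*}(W_B W_B^{*})^{-1}W_B$, the identity $z^{*}\Sigma z=z^{*}\Pi\Sigma\Pi z$ for $z\in\ker W_B$ is trivial since $\Pi z=z$, but ``comparing the sign of $\Pi\Sigma\Pi$ with that of $W_B\Sigma W_B^{*}$'' is exactly the statement to be proved and does not follow from orthogonal projection alone: $\Sigma$ is indefinite, and $\mathrm{ran}\,W_B^{*}$ is the Euclidean-orthogonal complement of $\ker W_B$, not the $\Sigma$-orthogonal one, so there is no reason the two restricted forms should have opposite signs. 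The argument in the cited book proceeds instead by an explicit parametrization $W_B=S[\,I+V\ \ I-V\,]$ with $S$ invertible (always possible for full-rank $W_B$ after the change of boundary variables), from which $W_B\Sigma W_B^{*}=2S(I-VV^{*})S^{*}$ and the quadratic form on $\ker W_B$ are computed directly and seen to share the sign of $I-VV^{*}$. Replacing your projection remark with that computation (or any equivalent explicit completion of $W_B$ to a $2n\times2n$ invertible matrix adapted to $\Sigma$) gives a complete proof.
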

Let us introduce the class of boundary controlled and observed (BCO for short) stochastic systems with $m$ inputs and $p$ outputs. We consider the stochastic boundary control system described by control equations:
\begin{equation}
\begin{aligned}
d\varepsilon(t) &= \mathcal{A}\varepsilon(t) dt+ Hdw(t), \qquad \varepsilon(0)=\varepsilon_0,\\
u(t)&=\mathcal{B}\varepsilon(t),\\
y(t) &=\mathcal{C}\varepsilon(t),
\end{aligned}
\label{bcostochastic}
\end{equation}
where $\mathcal{A}:D(\mathcal{A})\rightarrow \mathcal{X}$, $\mathcal{B}:D(\mathcal{B})\rightarrow \mathbb{K}^m$ and $\mathcal{C}:D(\mathcal{C})\rightarrow \mathbb{K}^p$ are unbounded linear operators s.t. $D(\mathcal{A})\subset D(\mathcal{B})\subset\mathcal{X}$. The input $u(t)$ is assumed to be a stochastic process in $L^2_\mathbb{F}([0,T];\mathbb{K}^m)$ and $w(t)$ is a Wiener process on a separable Hilbert space $Z$ with covariance operator $Q$ of trace class and intensity $H\in\pazocal{L}(Z,\mathcal{X})$. The initial condition $\varepsilon_0$ is a $\mathcal{X}$-valued Gaussian random variable with mean $m_{\varepsilon_0}$ and covariance operator $Q_0$. $w$ and $\varepsilon_0$ are assumed to be mutually independent.  
\begin{definition}
A BCO stochastic system is a system described by (\ref{bcostochastic}) which satisfies the following conditions:
\begin{enumerate}
\item[1.] The operator $A : D(A)\rightarrow \mathcal{X}$ defined for every $x\in D(A)= D(\mathcal{A})\cap Ker(\mathcal{B})$ by $Ax=\mathcal{A}x$, is the infinitesimal generator of a $C_0$-semigroup $(T(t))_{t\geq 0}$ on $\mathcal{X}$,
\item[2.] There exists an operator $B\in \pazocal{L}(\mathbb{K}^m,\mathcal{X})$ such that, for every $u\in \mathbb{K}^m$, we have $Bu\in D(\mathcal{A})$, $\mathcal{A}B\in\pazocal{L}(\mathbb{K}^m,\mathcal{X})$ and $\mathcal{B}Bu=u$ for all $u\in \mathbb{K}^m$;
\item[3.] The observation operator $\mathcal{C}\in\pazocal{L}(D(A),\mathbb{K}^p)$, where $D(A)$ is endowed with the graph norm of $A$.
\item[4.] $w(t)$ is a Wiener process with $\Tr[Q]<\infty$ and $H\in L^0_2$, i.e. $\| H\|^2_{L_2^0}:=\Tr[HQH^*] <\infty$ , which ensures that the It\^o integrals $\int_0^t H dw(s)$ and $\int_0^t T(t-s) H dw(s)$ are well-defined. 
\end{enumerate}
\label{def_bcostochastic}
\end{definition}
Note that the space of H-S operators $L_2^0=L_2(Q^{1/2}(Z),\mathcal{X})$ is detailed in Appendix \ref{appendixA}.\\

We are now in position to detail the specific class of SPHSs which will be studied in details in this paper.
\begin{definition}
Boundary controlled and observed stochastic port-Hamiltonian systems are described by 
\begin{align}
\label{phito}
d\varepsilon(t) &= \mathcal{A}\varepsilon(t) dt + Hdw(t), \qquad \varepsilon(0)=\varepsilon_0,\\
\label{inputsto}
u(t) &= W_{B,1} \left[
\begin{array}{c}
f_\partial(t) \\
e_\partial(t)
\end{array}
\right] =: \mathcal{B}\left[ \varepsilon(t)\right],\\
\label{homogeneoussto}
0 &= W_{B,2} \left[
\begin{array}{c}
f_\partial(t) \\
e_\partial(t)
\end{array}
\right],\\ 
\label{outputsto}
y(t) &= W_C \left[
\begin{array}{c}
f_\partial(t) \\
e_\partial(t)
\end{array}
\right]=: \mathcal{C}\left[ \varepsilon(t)\right],
\end{align}
where $W_B:= \begin{scriptsize}
\left[
\begin{array}{c}
W_{B,1} \\
W_{B,2}
\end{array}
\right]
\end{scriptsize} \in \mathbb{K}^{n\times 2n}$ and $W_C\in \mathbb{K}^{p\times 2n}$, $\mathcal{A}$ is a linear operator given by 
\begin{equation}
\mathcal{A}\varepsilon:= P_1 \dfrac{d}{d\zeta}(\mathcal{H} \varepsilon) + P_0 (\mathcal{H} \varepsilon)
\label{A_bc}
\end{equation}
and $\mathcal{B}:D(\mathcal{B})\to \mathbb{K}^m$ is a linear operator, with the same domain  
\begin{equation}
D(\mathcal{A}):= \left\lbrace \varepsilon(t)\in \mathcal{X}: \mathcal{H}\varepsilon(t)\in H^1([a,b];\mathbb{K}^n) \text{	and		} W_{B,2}\left[
 \begin{array}{c}
 f_\partial \\
 e_\partial 
\end{array}
\right]=0 \right\rbrace = D(\mathcal{B}).
\end{equation}
\label{sph}
\end{definition}
The following two conditions will be assumed to hold throughout.
\begin{assumption}
The matrices $W_B$ and $W_C$ are full rank, $W_B$ satisfies $W_B \Sigma W_B^*\geq 0$ and \begin{small} \text{rank	}
$\left[
\begin{array}{c}
W_{B,1} \\
W_C
\end{array}
\right] = m + p$
\end{small}. \label{assumptions3}
\end{assumption}
Notice that hereafter the expression \textit{boundary controlled and observed stochastic port-Hamiltonian systems} will be shortened in stochastic port-Hamiltonian systems (SPHSs). The boundary control and observation will not be specified anymore. From \citep[Theorem 11.3.2]{zwart}, it is known that the SPHS (\ref{phito})-(\ref{homogeneoussto}) is a boundary controlled stochastic system as defined in Definition \ref{def_bcostochastic} and thus, the change of variables for the state: $X(\zeta,t) = \varepsilon(\zeta,t) - Bu(t)$ applied to (\ref{phito}) leads to an associated SDE given by 
\begin{equation}
dX(t) = AX(t) dt - B\dot{u}(t) dt + \mathcal{A}Bu(t) dt + Hdw(t), \qquad X(0)=X_0.
\label{stochasticassociated}
\end{equation} 
\begin{definition}
A Hilbert space-valued process $(X(t))_{t\in[0,T]}$ is said to be a mild solution of (\ref{stochasticassociated}) with respect to $(w(t))_{t\in[0,T]}$ if 
\begin{enumerate}
\item $X(t)$ is $\mathbb{F}$-adapted;
\item $X(t)\in C([0,T];L^2(\Omega;\mathcal{X}))$
\item for all $t\in[0,T]$,  $\mathbb{P}(\omega\in \Omega: \int_0^T \|X(\omega,t)\|^2_\mathcal{X} ds < \infty) =1$  and
\begin{equation}
\begin{split}
X(t)= T(t)X_0 + \int_0^t T(t-s) (\mathcal{A}Bu(s) - B\dot{u}(s)) ds + \int_0^t T(t-s) H dw(s).
\end{split}
\label{solution_formula}
\end{equation}
\end{enumerate}
\end{definition}
Observe that from Condition 4 of Definition \ref{def_bcostochastic}, the stochastic convolution process $W_A(t):= \int_0^t T(t-s)Hdw(s)$ is well-defined. Unlike the stochastic integral, the convolutional stochastic integral is no longer a martingale and is only mean-square continuous. The relation between the mild solutions of (\ref{phito}) and (\ref{stochasticassociated}) is given by $X(\zeta,t) = \varepsilon(\zeta,t) - Bu(t)$.\\
Now, we can state the first specific result of this paper.
\begin{thrm}
Consider a stochastic port-Hamiltonian system (\ref{phito})-(\ref{homogeneoussto}) as in Definition \ref{sph}, satisfying Assumption \ref{assumptions3} and Condition 4 of Definition \ref{def_bcostochastic}. In this setting, the mild solution of (\ref{phito}) is represented as state trajectories of a stochastic process given by (\ref{solution_formula}) and satisfies the following estimate: for any $t>0$, there is a constant $K(t)>0$ such that
\begin{equation}
\E\|X(t)\|^2_\mathcal{X} \leq K(t) \left[\E\| X_0\|^2_\mathcal{X} + \E \|u\|^2_{H^1([0,t];\mathbb{K}^m)} + \Tr[Q] \right],
\end{equation}
where $\|\cdot\|_{H^1([0,t];\mathbb{K}^m)}= \int_0^{t} \|\cdot\|_{\mathbb{K}^m} ds + \int_0^t \|\frac{d(\cdot)}{ds}\|_{\mathbb{K}^m} ds$.
Moreover, if $u$ is deterministic such that $u(t)= \mathcal{B}\E[X(t)]$, for every $t >0$: 
\begin{enumerate}
\item The mean of $X(t)$ is governed by the abstract differential equation (ADE) 
\begin{equation}
\dot{m}_X(t)= T(t) m_{X_0} + \mathcal{A}Bu(t)-B\dot{u}(t),
\label{mean_eq}
\end{equation}
whose mild solution is $m_X(t) = T(t)m_{X_0} + \int_0^t T(t-s) (\mathcal{A}Bu(s)-B\dot{u}(s))ds$. 
\item The variance of $X(t)$ is governed by the Lyapunov type ADE 
\begin{equation}
\dot{Cov}(X(t)) = A Cov(X(t)) + Cov(X(t))A^* + HQH^*,
\label{var_eq}
\end{equation}
whose mild solution is $Cov(X(t)) = T(t)Q_0T(t)^* + \int_0^t T(t-s)HQH^* T(t-s)^* ds$.  
\end{enumerate}
\label{mildsolutionthm}
\end{thrm}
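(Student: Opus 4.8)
The plan is to reduce everything to the associated SDE (\ref{stochasticassociated}) and then apply the variation-of-constants representation for linear stochastic evolution equations. First I would note that Assumption \ref{assumptions3} ($W_B$ of full rank, $W_B\Sigma W_B^*\geq 0$) makes Theorem \ref{generation} applicable, so the restriction $A$ of $\mathcal{A}$ to $D(A)=D(\mathcal{A})\cap Ker(\mathcal{B})$ generates a contraction $C_0$-semigroup $(T(t))_{t\geq 0}$ on $\mathcal{X}$; combined with \citep[Theorem 11.3.2]{zwart} this shows the SPHS is a BCO stochastic system, so Condition 2 of Definition \ref{def_bcostochastic} furnishes $B\in\pazocal{L}(\mathbb{K}^m,\mathcal{X})$ with $\mathcal{A}B\in\pazocal{L}(\mathbb{K}^m,\mathcal{X})$, and the substitution $X=\varepsilon-Bu$ turns (\ref{phito}) into (\ref{stochasticassociated}). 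Since $u\in H^1([0,t];\mathbb{K}^m)$, the drift $f(s):=\mathcal{A}Bu(s)-B\dot u(s)$ is (pathwise) an element of $L^2([0,t];\mathcal{X})$, and Condition 4 of Definition \ref{def_bcostochastic} ($H\in L_2^0$) makes the stochastic convolution $W_A(t)=\int_0^t T(t-s)Hdw(s)$ well-defined; the representation (\ref{solution_formula}) then follows from the standard theory of mild solutions of linear SDEs (see e.g. \citep{daprato}).

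For the estimate, I would decompose $X(t)$ according to (\ref{solution_formula}) into the three pieces $T(t)X_0$, the deterministic-type convolution $\int_0^t T(t-s)f(s)ds$, and the stochastic convolution $W_A(t)$, and apply $\|a_1+a_2+a_3\|_\mathcal{X}^2\leq 3(\|a_1\|_\mathcal{X}^2+\|a_2\|_\mathcal{X}^2+\|a_3\|_\mathcal{X}^2)$ before taking expectations, so that no cross terms need to be controlled. Contractivity handles the first term, $\E\|T(t)X_0\|_\mathcal{X}^2\leq\E\|X_0\|_\mathcal{X}^2$. For the second, combining $\|T(t-s)\|\leq 1$, the boundedness of $\mathcal{A}B$ and $B$, and the Cauchy--Schwarz inequality in the time variable produces a bound of the form $c_1(t)\,\E\|u\|^2_{H^1([0,t];\mathbb{K}^m)}$. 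For the third, the It\^o isometry for stochastic convolutions of deterministic integrands (Appendix \ref{appendixA}) gives $\E\|W_A(t)\|_\mathcal{X}^2=\int_0^t\|T(t-s)H\|_{L_2^0}^2ds\leq t\,\Tr[HQH^*]\leq t\,\|H\|^2\Tr[Q]$. Absorbing the three constants into a single $K(t)$ finishes this part.

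For the mean, assuming $u$ deterministic, I would take expectations in (\ref{solution_formula}): the deterministic drift is unchanged and $\E[W_A(t)]=0$, giving $m_X(t)=T(t)m_{X_0}+\int_0^t T(t-s)(\mathcal{A}Bu(s)-B\dot u(s))ds$, which is the mild solution of (\ref{mean_eq}). For the variance, I would set $Y(t):=X(t)-m_X(t)=T(t)(X_0-m_{X_0})+W_A(t)$ (the deterministic parts cancel), use the mutual independence of $\varepsilon_0$ and $w$ together with $\E[W_A(t)]=0$ to discard the mixed terms in $\E[Y(t)Y(t)^*]$, identify $\E[(X_0-m_{X_0})(X_0-m_{X_0})^*]=Q_0$, and apply the It\^o isometry $\E[W_A(t)W_A(t)^*]=\int_0^t T(t-s)HQH^*T(t-s)^*ds$, obtaining $Cov(X(t))=T(t)Q_0T(t)^*+\int_0^t T(t-s)HQH^*T(t-s)^*ds$; that this operator is the mild solution of the Lyapunov equation (\ref{var_eq}) is then verified by (formal) differentiation, or cited from the standard theory of Lyapunov equations for $C_0$-semigroups.

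The main obstacle I anticipate is not any single computation but making the reduction to (\ref{stochasticassociated}) fully rigorous — in particular, justifying that the regularity $u\in H^1$ in time is precisely what is needed for the $B\dot u$ term to define an $\mathcal{X}$-valued drift, and that the two notions of mild solution (for the boundary control system and for (\ref{stochasticassociated})) correspond under $X=\varepsilon-Bu$ — together with the careful checking that the It\^o isometries invoked both in the estimate and in the variance computation hold in the present trace-class, deterministic-integrand setting, for which one leans on the results collected in Appendix \ref{appendixA}.
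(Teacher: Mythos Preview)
Your proposal is correct and follows essentially the same approach as the paper: both arguments decompose (\ref{solution_formula}) into its three pieces, apply the elementary inequality $\|a_1+a_2+a_3\|^2\leq 3\sum\|a_i\|^2$, then use boundedness of $\mathcal{A}B$, $B$, $H\in L_2^0$ together with the It\^o isometry for the estimate; the mean is obtained from the vanishing expectation of the stochastic integral, and the covariance from independence of $X_0$ and $w$ plus differentiation (the paper phrases this last step as ``Leibniz' differentiation rule''). If anything, your write-up is slightly more explicit than the paper's — you spell out the contractivity bound and the Cauchy--Schwarz step in time, which the paper simply absorbs into the displayed chain of inequalities.
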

\begin{proof}
The existence and uniqueness of a mild solution can be directly deduced by using a probabilistic fixed point argument and its expression is obtained from the variational constant formula (\ref{solution_formula}). The estimate is obtained by using It\^o's isometry and the boundedness of the operators $\mathcal{A}B$, $B$ and $H\in L_2^0$.  
\begin{align*}
\E\|X(t)\|^2_\mathcal{X} &= \E\| T(t)X_0 + \int_0^t T(t-s) (\mathcal{A}Bu(s) - B\dot{u}(s)) ds + \int_0^t T(t-s) H dw(s) \|^2_\mathcal{X}\\
&\leq 3 \E\|X_0\|^2_\mathcal{X} + 3\E \int_0^t \|\mathcal{A}Bu(s) -B\dot{u}(s) \|^2_\mathcal{X} ds + 3 t\|H\|^2_{L_2^0}\\
&\leq K(t) \left[\E \|X_0\|^2_\mathcal{X} + \E \|u\|^2_{H^1([0,t];\mathbb{K}^m)}+ \Tr[Q] \right]
\end{align*}
$X(t)$ given by (\ref{solution_formula}) is $\mathbb{F}$-adapted since $W_A(t)$ and $u(t)$ are $\mathbb{F}$-adapted and $X_0$ is $\mathcal{F}_0$-measurable. The mean-square continuity of $X(t)$ is a straightforward consequence of the mean-square continuity of $W_A(t)$.\\
Using the vanishing property of the stochastic integral and the fact that $X_0$ has mean $m_{X_0}$, (\ref{mean_eq}) is obtained.\\
Using the independence of $X_0$ and $w(t)$, (\ref{var_eq}) is deduced by Leibniz' differentiation rule. 
\end{proof}
\begin{rem}
Theorem \ref{mildsolutionthm} also holds for general BCO stochastic systems as defined in Definition \ref{def_bcostochastic}.
\end{rem}
In most cases, the mild solutions are not continuous regarding their sample paths $X(\omega,t)$. Indeed, even though the deterministic part of (\ref{solution_formula}) is continuous, the mild solution is not continuous since the stochastic convolution term only satisfies the mean-square continuity. However, since we are considering a specific class of systems, which are stochastic port-Hamiltonian systems, we shall prove in the following result that the continuity of the sample paths holds for this class. The proof of this result is based on the Hausenblas-Seidler approach, see \citep{Hausenblas2001}.\\

\begin{thrm}
Assume that the stochastic port-Hamiltonian system (\ref{phito})-(\ref{homogeneoussto}) satisfying Assumption \ref{assumptions3} and Condition 4 of Definition \ref{def_bcostochastic} admits a mild solution $X(t)$ given by (\ref{solution_formula}). Then, $X(t)$ has continuous sample paths.
\label{continuity}
\end{thrm}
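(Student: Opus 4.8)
The plan is to decompose the mild solution (\ref{solution_formula}) as $X(t)=X_{\mathrm{det}}(t)+W_A(t)$, where $X_{\mathrm{det}}(t)=T(t)X_0+\int_0^t T(t-s)(\mathcal{A}Bu(s)-B\dot u(s))\,ds$ is the deterministic convolution part and $W_A(t)=\int_0^t T(t-s)H\,dw(s)$ is the stochastic convolution. Since $u\in H^1$ and $\mathcal{A}B$, $B$ are bounded, the integrand $\mathcal{A}Bu(\cdot)-B\dot u(\cdot)$ lies in $L^1([0,t];\mathcal{X})$ pathwise, so $X_{\mathrm{det}}(t)$ is (almost surely) continuous in $t$ by standard strong-continuity arguments for the deterministic variation-of-constants formula; likewise $t\mapsto T(t)X_0$ is continuous because $(T(t))_{t\ge0}$ is a $C_0$-semigroup and $X_0\in\mathcal{X}$. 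Hence the whole question reduces to establishing that the stochastic convolution $W_A(t)$ admits a continuous modification.

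For $W_A$, I would invoke the Hausenblas–Seidler factorization argument (as announced, following \citep{Hausenblas2001}). The key is the stochastic Fubini / factorization identity: for $\alpha\in(0,1)$,
\begin{equation*}
W_A(t)=\frac{\sin(\pi\alpha)}{\pi}\int_0^t (t-s)^{\alpha-1} T(t-s) Y_\alpha(s)\,ds,\qquad Y_\alpha(s):=\int_0^s (s-r)^{-\alpha} T(s-r) H\,dw(r).
\end{equation*}
One shows that $Y_\alpha\in L^p([0,T];\mathcal{X})$ almost surely for suitable $p$; this uses It\^o's isometry together with $\E\|Y_\alpha(s)\|_\mathcal{X}^2\le \int_0^s (s-r)^{-2\alpha}\|T(s-r)H\|_{L_2^0}^2\,dr$, which is finite provided $\alpha<1/2$ because $H\in L_2^0$ (Condition 4 of Definition \ref{def_bcostochastic}) and $(T(t))_{t\ge0}$ is a contraction semigroup (Theorem \ref{generation}, so $\|T(t)H\|_{L_2^0}\le\|H\|_{L_2^0}$). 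Then the factorization operator $v\mapsto \int_0^t(t-s)^{\alpha-1}T(t-s)v(s)\,ds$ maps $L^p([0,T];\mathcal{X})$ continuously into $C([0,T];\mathcal{X})$ whenever $\alpha>1/p$, by a Young-type / H\"older estimate on the singular kernel. Choosing $p$ large with $1/p<\alpha<1/2$ closes the loop and yields, for almost every $\omega$, a continuous trajectory $t\mapsto W_A(t)$; together with the adaptedness already noted in Theorem \ref{mildsolutionthm} this gives the continuous modification.

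The main obstacle is the integrability of the kernel: one must verify that the singularity $(s-r)^{-\alpha}$ from the regularizing factor is still square-integrable against $\|T(s-r)H\|_{L_2^0}^2$, i.e. that $\int_0^T r^{-2\alpha}\,dr<\infty$, which forces $\alpha<1/2$, while the factorization step requires $\alpha>1/p$; these two constraints are simultaneously satisfiable for $p>2$, so there is no genuine conflict here — but one must track the constants carefully and confirm that the contraction property of the semigroup (rather than merely boundedness, which would bring in an exponential $e^{\omega t}$ that is harmless on $[0,T]$) suffices. A secondary point to handle is that $H$ acts on the Wiener process with covariance $Q$ of trace class, so $\|T(t)H\|_{L_2^0}^2=\Tr[T(t)HQH^*T(t)^*]\le\|T(t)\|^2\Tr[HQH^*]=\|T(t)\|^2\|H\|_{L_2^0}^2$, which is exactly the bound needed; I would state this explicitly so the reduction to the scalar kernel integrability is transparent.
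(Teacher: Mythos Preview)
Your argument is correct, but it takes a genuinely different route from the paper's, and there is a mislabeling worth flagging. What you actually carry out is the Da~Prato--Kwapie\'n--Zabczyk \emph{factorization method}: write $W_A(t)$ as a fractional Riemann--Liouville integral of the auxiliary process $Y_\alpha$, show $Y_\alpha\in L^p$, and use the smoothing property of the factorization operator. The paper, by contrast, follows the actual Hausenblas--Seidler approach, which is the \emph{unitary dilation} argument: since $(T(t))_{t\ge0}$ is a contraction semigroup, Sz.-Nagy--Foia\c{s} theory gives a unitary group $(\bar T(t))_{t\in\R}$ on a larger Hilbert space $\mathcal{X}_1\supset\mathcal{X}$ with $T(t)=P\bar T(t)$ for the orthogonal projection $P$ onto $\mathcal{X}$. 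One then writes $\int_0^t \bar T(t-s)H\,dw(s)=\bar T(t)\int_0^t \bar T(-s)H\,dw(s)$; the right-hand stochastic integral is an ordinary It\^o integral (no convolution), hence a continuous martingale, and composing with the strongly continuous $\bar T(t)$ and the bounded projection $P$ preserves pathwise continuity. So your citation of \citep{Hausenblas2001} points to the right paper but not to the argument you wrote.

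As to what each approach buys: the dilation proof is shorter and uses the contraction hypothesis in an essential and elegant way, avoiding any choice of exponents. Your factorization argument is more robust---it applies to arbitrary $C_0$-semigroups, not just contractions---but requires the extra step of getting $Y_\alpha\in L^p$ for some $p>2$. On that step, you write ``this uses It\^o's isometry,'' which only gives $L^2$; to close the gap you should remark that since the integrand $(s-r)^{-\alpha}T(s-r)H$ is deterministic, $Y_\alpha(s)$ is Gaussian in $\mathcal{X}$, so $\E\|Y_\alpha(s)\|_\mathcal{X}^p\le C_p(\E\|Y_\alpha(s)\|_\mathcal{X}^2)^{p/2}$ and the $L^p$ bound follows from the $L^2$ one.
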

\begin{proof}
Since we already know that the Bochner integral $\int_0^t T(t-s) (\mathcal{A}Bu(s) - B\dot{u}(s)) ds$ and $T(t)X_0$ are continuous, it remains to prove that $\int_0^t T(t-s) H dw(s)$ is continuous $\mathbb{P}$-a.s. Since $(T(t))_{t\geq 0}$ is a contraction $C_0$-semigroup, we can apply the Sz-Nagy-Foias theory of dilations \citep{unitary_dilations} as done in \citep{Hausenblas2001}. Therefore, the $C_0$-semigroup $(T(t))_{t\geq 0}$ has a unitary dilation $(\bar{T}(t))_{t\geq 0}$ on a larger Hilbert space $\mathcal{X}_1$ so that the state space $\mathcal{X}$ is embedded as a closed subspace of $\mathcal{X}_1$. Besides, $(\bar{T}(t))_{t\geq 0}$ is a strongly continuous unitary group on $\mathcal{X}_1$ with $T(t)=P \bar{T}(t)$ for all $t\geq 0$, where $P$ is the orthogonal projection of $\mathcal{X}_1$ onto $\mathcal{X}$. We denote the infinitesimal generator of $(\bar{T}(t))_{t\geq 0}$ as $\bar{A}$. Hence the stochastic convolution of the operator $\bar{A}$ can be expressed as
\begin{equation}
\int_0^t \bar{T}(t-s) H dw(s) = \bar{T}(t) \int_0^t \bar{T}(-s) H dw(s).  
\end{equation}
First, we shall prove that $\int_0^t \bar{T}(-s) H dw(s)$ has continuous sample paths and thus so does $\int_0^t \bar{T}(t-s) H dw(s)$. It is known that if $\E\left[\int_0^t \|\bar{T}(-s) H\|^2_{L_2^0} ds \right]<\infty$, then $\int_0^t \bar{T}(-s) H dw(s)$ is continuous. Next, the continuity of the orthogonal projection $P$ entails that the stochastic convolution term $\int_0^t T(t-s) H dw(s)$ has continuous sample paths, which concludes the proof.  
\end{proof}

In view of It\^o's formula (\ref{itoformula}), see Appendix \ref{appendixA}, an energy balance equation can be obtained. However, It\^o's formula cannot be applied directly to mild solutions. In order to solve this problem, existence and uniqueness of weak and strong solutions will be proved for SPHSs in the subsequent section.   

\section{Existence and uniqueness of weak and strong solutions}
\label{weak}
The concept of weak solution is obtained by applying $z\in D(A^*)$ to both parts of the stochastic differential equation (\ref{stochasticassociated}). 
\begin{definition}
A $\mathcal{X}$-valued process $(X(t))_{t\in [0,T]}$ with $T\geq 0$ is said to be a weak solution of (\ref{stochasticassociated}) with respect to the Wiener process $(w(t))_{t\in [0,T]}$ if the trajectories $X(t)$ are $\mathbb{P}$-a.s Bochner integrable and if for all $z\in D(A^*)$ and $t\in[0,T]$
\begin{equation}
\begin{split}
\psh{X(t)}{z}_{\mathcal{X}} = \psh{X_0}{z}_{\mathcal{X}} + \int_0^t \left[ \psh{X(s)}{A^*z}_{\mathcal{X}} + \psh{\mathcal{A}Bu(t)-B\dot{u}(t)}{z}_{\mathcal{X}}\right]ds + \psh{Hw(t)}{z}_{\mathcal{X}}, \text{		} \mathbb{P}-\text{a.s}.
\end{split}
\label{weak_formula}
\end{equation} 
\end{definition}

Since $\mathcal{A}B$ and $B$ are bounded operators, \citep[Section 5.2]{daprato} can be used to show the existence and uniqueness of a weak solution to (\ref{stochasticassociated}).

\begin{thrm}
Consider a BCO stochastic system (\ref{bcostochastic}) as in Definition \ref{def_bcostochastic}. Then, for every input $u\in C_\mathbb{F}^2([0,T];\mathbb{K}^m)$, $\mathcal{H}\varepsilon_0 \in H^1([a,b];\mathbb{K}^n)$ and $u(0) = W_B  \left[
\begin{array}{c}
f_\partial(0) \\
e_\partial(0)
\end{array}
\right]$,
the stochastic differential equation (\ref{stochasticassociated}) admits a unique weak solution given by (\ref{solution_formula}). Since $X(t)$ defined by (\ref{solution_formula}) is almost surely integrable, the mild and weak solutions coincide.
\label{existence_theorem}
\end{thrm}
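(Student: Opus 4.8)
The plan is to establish the result in two movements: first, invoke the abstract existence and uniqueness theory for weak solutions of linear SDEs with bounded inhomogeneous drift (as developed in \citep[Section 5.2]{daprato}); second, verify that the specific mild solution formula (\ref{solution_formula}) is precisely the weak solution, so that the two notions coincide. The hypotheses on $u$ and $\varepsilon_0$ are present exactly to make the inhomogeneous term $t\mapsto \mathcal{A}Bu(t)-B\dot u(t)$ a well-defined, sufficiently regular $\mathcal{X}$-valued process; note that $u\in C^2_\mathbb{F}([0,T];\mathbb{K}^m)$ guarantees $\dot u$ exists in the mean-square sense and is adapted, and the boundary compatibility condition $u(0)=W_B[f_\partial(0);e_\partial(0)]$ ensures $X_0 = \varepsilon_0 - Bu(0)\in D(A)$, which is what legitimizes the change of variables (\ref{stochasticassociated}) in the first place.

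First I would record that, by Definition \ref{def_bcostochastic}, $A$ generates a $C_0$-semigroup $(T(t))_{t\geq 0}$, that $\mathcal{A}B$ and $B$ belong to $\pazocal{L}(\mathbb{K}^m,\mathcal{X})$, and that $H\in L_2^0$ with $\Tr[Q]<\infty$, so that all three integrals in (\ref{solution_formula}) are meaningful: the Bochner integral $\int_0^t T(t-s)(\mathcal{A}Bu(s)-B\dot u(s))\,ds$ converges $\mathbb{P}$-a.s.\ because its integrand is a continuous (hence strongly measurable, essentially bounded on $[0,t]$) $\mathcal{X}$-valued process, and the stochastic convolution $\int_0^t T(t-s)H\,dw(s)$ is well-defined and mean-square continuous by Condition 4. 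Then I would apply the abstract result of \citep[Section 5.2]{daprato}: for a linear SDE $dX=(AX+g(t))\,dt+H\,dw$ with $g\in L^2_\mathbb{F}([0,T];\mathcal{X})$ (here $g(t)=\mathcal{A}Bu(t)-B\dot u(t)$, which lies in $C^2_\mathbb{F}([0,T];\mathcal{X})\subset L^2_\mathbb{F}([0,T];\mathcal{X})$ by boundedness of $\mathcal{A}B,B$ and the regularity of $u$), there exists a unique weak solution and it is given by the variation-of-constants formula. This directly yields both existence/uniqueness and the identity with (\ref{solution_formula}).

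The remaining point — and the one requiring a genuine argument rather than citation — is the last sentence: that the mild and weak solutions coincide. The standard route is the stochastic Fubini theorem together with the identity $\int_0^t T(s)A^*\text{-action}$; concretely, for $z\in D(A^*)$ one pairs (\ref{solution_formula}) against $z$, uses $\langle T(r)\xi,z\rangle = \langle \xi,z\rangle + \int_0^r\langle T(\sigma)\xi,A^*z\rangle\,d\sigma$ applied to $\xi=X_0$, to the integrand of the Bochner term, and (via the stochastic Fubini theorem, applicable because $\E\int_0^t\|T(t-s)H\|_{L_2^0}^2\,ds<\infty$) to the convolution term, and then reassembles the pieces to recover (\ref{weak_formula}). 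The key technical prerequisite is that $X(t)$ is $\mathbb{P}$-a.s.\ Bochner integrable over $[0,T]$, which the theorem statement explicitly flags and which follows from the estimate in Theorem \ref{mildsolutionthm} (finiteness of $\E\int_0^T\|X(t)\|^2_\mathcal{X}\,dt$) combined with mean-square continuity. I expect the main obstacle to be purely bookkeeping: justifying the interchange of the deterministic/stochastic integrals with the action of $A^*$, i.e.\ checking the hypotheses of the stochastic Fubini theorem and of the closedness argument for $A$, none of which is deep but all of which must be stated cleanly; once that is in place, the equality of the two solution concepts is immediate from uniqueness of the weak solution.
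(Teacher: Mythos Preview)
Your proposal is correct. The paper's own proof differs in structure: rather than treating the full inhomogeneous SDE $dX=(AX+g(t))\,dt+H\,dw$ at once via \citep[Section 5.2]{daprato} as you do, the paper decomposes by linearity. It first cites \citep[Theorem 10.1.8]{zwart} to conclude that $T(t)x_0+\int_0^t T(t-s)(\mathcal{A}Bu(s)-B\dot u(s))\,ds$ is the unique weak solution of the purely deterministic equation $\dot x=Ax+\mathcal{A}Bu-B\dot u$, and then cites \citep[Theorem 5.4]{daprato} separately to show that the stochastic convolution $\int_0^t T(t-s)H\,dw(s)$ is the unique weak solution of $dX=AX\,dt+H\,dw$ with $X(0)=0$; the sum then solves the full equation. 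Both routes are valid and ultimately rest on the same Da~Prato--Zabczyk machinery. The paper's decomposition has the advantage of explicitly reusing the established deterministic port-Hamiltonian result, in keeping with the paper's theme of lifting deterministic theory to the stochastic setting, whereas your unified treatment is more self-contained and avoids the extra citation. Your sketch of the stochastic Fubini argument for the mild/weak equivalence is exactly what underlies \citep[Theorem 5.4]{daprato}, which the paper simply invokes without unpacking.
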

\begin{proof}
From \citep[Theorem 10.1.8]{zwart}, it is already known that $x(t)$ given by 
$$
x(t)= T(t)x_0 + \int_0^t T(t-s)(\mathcal{A}Bu(s) - B\dot{u}(s)) ds, \qquad t\geq 0
$$
is the unique weak solution of  
\begin{equation}
\dot{x}(t) = Ax(t)+\mathcal{A}Bu(t)-B\dot{u}(t), \qquad x(0)=x_0. 
\end{equation}
Therefore, it is enough to prove that the process  $\int_0^t T(t-s) Hdw(s)$ is a unique weak solution of
\begin{equation}
dX(t) = AX(t) dt + H dw(t), \qquad X(0)=0.
\label{weaksystem}
\end{equation}
 with $t>0$. For this we refer to the proof of \citep[Theorem 5.4]{daprato}
\end{proof}

The strong solution is more restrictive than the weak one since it must take values in $D(A)$. Therefore, the usual way of defining the solution by integrating both parts of the stochastic equation (\ref{stochasticassociated}) can be applied.

\begin{definition}
A $\mathcal{X}$-valued process $(X(t))_{t\in [0,T]}$ with $T\geq 0$ is said to be a strong solution of (\ref{stochasticassociated}) with respect to the Wiener process $(w(t))_{t\in [0,T]}$ with the covariance operator $Q$ satisfying $\Tr Q < \infty$ if $X(t)$ belongs to $D(A)$, $\int_0^\infty \|A X(s)\|ds <\infty$ $\mathbb{P}$-a.s and the process $(X(t))_{t \in [0,T]}$ is given by
\begin{equation}
X(t) = X_0 + \int_0^t \left(AX(s) + \mathcal{A}Bu(s)-B\dot{u}(s)\right) ds + \int_0^t H dw(s), \qquad \mathbb{P}-\text{a.s}.
\label{strongequation} 
\end{equation} 
\end{definition}  

Observe that $\mathcal{A}Bu(s)-B\dot{u}(s)\in D(A)=D(\mathcal{A})\cap\text{Ker}\mathcal{B}$ would be too restrictive on $u(t)$: see Section \ref{illustration} where the input should then be taken as $u(t)=0$. Therefore, SPHSs with control in the dynamic does not have a strong solution. 
This has the inconvenient that It\^o's formula cannot be applied directly to (\ref{stochasticassociated}). Thus, in order to have a strong solution, we shall extend the state space and build a family of approximate systems by using the Yosida approximate and a limiting argument. The extended state space is defined as $\mathcal{X}^e:=\mathbb{K}^m\oplus \mathcal{X}$, where the (extended) state is defined as $X^e(t):=(\begin{array}{cc}u(t) & X(t) \end{array})^T$ and $\tilde{u}(t)=\dot{u}(t)$. Then
\begin{equation}
\begin{aligned}
dX^e(t) = \left( \begin{array}{cc}
0 & 0\\
\mathcal{A}B & A
\end{array}\right) X^e(t) dt + \left( \begin{array}{c}
I\\
-B
\end{array}\right) \tilde{u}(t) dt + \left( \begin{array}{c}
0\\
H
\end{array}\right)  dw(t).
\end{aligned}
\label{extended}
\end{equation}
Let us define $A^e:=\left( \begin{array}{cc}
0 & 0\\
\mathcal{A}B & A
\end{array}\right)$ and $B^e:= \left( \begin{array}{c}
I\\
-B
\end{array}\right)$ with domains $D(A^e)=\mathbb{K}^m\oplus D(A)$ and $D(B^e)= \mathbb{K}^m$ and $H^e=\left[\begin{array}{c}
0\\
H
\end{array} \right]$. From \citep[Theorem 3.3.4]{zwart}, $A^e$ is the infinitesimal generator of a $C_0$-semigroup $T^e(t)= \left(\begin{array}{cc}
I & 0\\
S(t) & T(t)
\end{array} \right)$, where $S(t)u:= \int_0^t T(t-s) \mathcal{A}Bu(s) ds$ for all $u(t) \in\mathbb{K}^m$. Let us define the following approximate control operator for all $\lambda\in\rho(A^e)$,
\begin{equation}
B^e_\lambda: \mathbb{K}^m \rightarrow \mathcal{X}: \tilde{u} \mapsto B^e_\lambda \tilde{u}:= \lambda R(\lambda,A^e)B^e\tilde{u}, 
\end{equation}
where the resolvent operator $R(\lambda,A^e)=(\lambda I- A^e)^{-1}$.
\begin{thrm}
Consider a BCO stochastic system (\ref{bcostochastic}) as in Definition \ref{def_bcostochastic}. In addition, we assume that $HQ^{1/2}(Z)\subset D(A)$ and that $X_0\in D(A)$. If the following condition holds for all $t\geq 0$:
\begin{align}
\int_0^t \| AT(t-s) H\|^2_{L_2^0} ds <\infty;
\end{align}
then for all $\lambda\in\rho(A^e)$, 
\begin{equation}
dX_{\lambda}^e(t)= A^e X_{\lambda}^e(t) dt + B_\lambda^e \tilde{u}(t) dt + H^e dw(t); \qquad X^e(0)= \left(\begin{array}{cc}
u(0) & X_0
\end{array}\right)^T\in D(A^e),
\end{equation}
has a unique strong solution $X^e_\lambda(t)$ with respect to $w(t)$, where $B^e_\lambda\tilde{u}=\lambda R(\lambda,A^e)B^e\tilde{u}$ for all $\tilde{u}\in\mathbb{K}^m$, such that 
\begin{equation}
\displaystyle \sup_{0\leq s\leq t} \E \|X_\lambda^e(s)-X^e(s)\|_{\mathcal{X}^e}^2 \rightarrow 0 \qquad \text{as	} \lambda \to \infty, 
\label{convergence}
\end{equation}
where $X^e(t)$ is the mild solution of (\ref{extended}).
\label{strongsolution}
\end{thrm}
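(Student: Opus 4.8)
The plan is to combine the classical Yosida-approximation scheme for stochastic evolution equations (as in \citep[Section 5.5, Theorem 7.5]{daprato}) with the specific structure of the extended system \eqref{extended}. First I would check that \eqref{extended} genuinely falls in the scope of the abstract theory: by \citep[Theorem 3.3.4]{zwart} the block operator $A^e$ generates the $C_0$-semigroup $T^e(t)$ written above; the operator $B^e\in\pazocal{L}(\mathbb{K}^m,\mathcal{X}^e)$ is bounded; and $H^e\in L_2^0$ with values in $D(A^e)$, the latter because $HQ^{1/2}(Z)\subset D(A)$ and the first block of $H^e$ is zero, so $H^eQ^{1/2}(Z)\subset \mathbb{K}^m\oplus D(A)=D(A^e)$. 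Likewise $X^e(0)=(u(0),X_0)^T\in\mathbb{K}^m\oplus D(A)=D(A^e)$. The standing hypothesis $\int_0^t\|AT(t-s)H\|^2_{L_2^0}\,ds<\infty$ translates (using $AT(t-s)$ acting in the lower-right block, and the fact that $S(t)$ maps the finite-dimensional input space boundedly) into $\int_0^t\|A^eT^e(t-s)H^e\|^2_{L_2^0}\,ds<\infty$, which is exactly the regularity needed to guarantee that the stochastic convolution $\int_0^t T^e(t-s)H^e\,dw(s)$ takes values in $D(A^e)$ and defines a process with the integrability required of a strong solution.

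Next, for each $\lambda\in\rho(A^e)$ I would replace $B^e$ by the Yosida-type smoothing $B^e_\lambda=\lambda R(\lambda,A^e)B^e$, which is bounded from $\mathbb{K}^m$ into $D(A^e)$ since $R(\lambda,A^e)$ maps $\mathcal{X}^e$ into $D(A^e)$. With $B^e_\lambda$ in place of $B^e$, the inhomogeneous term $B^e_\lambda\tilde u(t)$ is a $D(A^e)$-valued process, the initial datum lies in $D(A^e)$, and the stochastic convolution lies in $D(A^e)$ by the previous step; hence the abstract existence-and-uniqueness theorem for strong solutions (\citep[Theorem 5.5 or 7.5]{daprato}, or the corresponding statement in \citep{pritchard,curtain}) applies verbatim and yields a unique strong solution $X^e_\lambda(t)$ of the $\lambda$-approximate equation, given by the variation-of-constants formula with $B^e_\lambda$.

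Finally I would establish the convergence \eqref{convergence}. Both $X^e_\lambda$ and the mild solution $X^e$ of \eqref{extended} admit mild representations; their difference is
\[
X^e_\lambda(t)-X^e(t)=\int_0^t T^e(t-s)\bigl(B^e_\lambda-B^e\bigr)\tilde u(s)\,ds,
\]
with \emph{no} stochastic term, since the noise $H^e\,dw$ is the same in both equations and the initial conditions agree. I would then use the Yosida convergence $\lambda R(\lambda,A^e)x\to x$ strongly as $\lambda\to\infty$ for every $x\in\mathcal{X}^e$, applied to $x=B^e\tilde u(s)$, together with the uniform bound $\|\lambda R(\lambda,A^e)\|\le C$ on a suitable right half-line (available because $A^e$ generates a $C_0$-semigroup, after a standard shift), and dominated convergence in $s$ and in $\omega$; the contraction (or at worst exponentially bounded) estimate $\|T^e(t-s)\|\le M e^{\omega(t-s)}$ controls the semigroup factor. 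This gives $\sup_{0\le s\le t}\E\|X^e_\lambda(s)-X^e(s)\|^2_{\mathcal{X}^e}\to 0$.

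The main obstacle I anticipate is the verification in the first paragraph that the scalar hypothesis $\int_0^t\|AT(t-s)H\|^2_{L_2^0}\,ds<\infty$ really does upgrade to $D(A^e)$-membership of the stochastic convolution for the \emph{block} operator $A^e$ — in particular handling the coupling term $\mathcal{A}B$ and the operator $S(t)$ correctly, since $A^e$ is not a diagonal perturbation and one must be careful that applying $A^e$ to $T^e(t-s)H^e$ does not produce an unbounded contribution from the $S(t)$ block. Once that bookkeeping is done, the rest is a routine application of the Yosida-approximation machinery for stochastic PDEs.
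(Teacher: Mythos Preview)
Your proposal is correct and matches the paper's own argument almost exactly: the paper verifies $X^e_\lambda(t)\in D(A^e)$ termwise (initial datum, deterministic convolution, stochastic convolution via \citep[Proposition 4.30]{daprato}), derives the integral equation by stochastic Fubini (this is relegated to Appendix~\ref{appendixB}), and proves \eqref{convergence} by the same Yosida estimate $\|\lambda R(\lambda,A^e)\|\le 2$ and dominated convergence you describe. Your anticipated obstacle dissolves immediately, since $H^e=(0,H)^T$ forces $T^e(t-s)H^e=(0,T(t-s)H)^T$ and hence $A^eT^e(t-s)H^e=(0,AT(t-s)H)^T$, so the coupling block $\mathcal{A}B$ and the operator $S(t)$ never enter.
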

\begin{proof} 
The uniqueness of the strong solution is a direct outcome of the uniqueness of the mild solution.\\
To prove that the mild solution of (\ref{extended}) satisfies the integral equation
\begin{equation}
X_\lambda^e(t) = X^e_0 + \int_0^t (A^eX_\lambda^e(s) + B_\lambda^e\tilde{u}(s)) ds + \int_0^t H^e dw(s),
\label{integralequation}
\end{equation} 
one can use a similar argumentation as in \citep[Theorem 5.35]{pritchard}. The derivation of identity (\ref{integralequation}) is quite standard and is available in Appendix \ref{appendixB} for interested readers.\\ 
Moreover, the continuity of $X^e_\lambda(t)$ can be deduced from the continuity of $\int_0^t H^e dw(s)$ and, since $A^eX^e_\lambda(t)$ is assumed to be integrable, $\int_0^t A^eX^e_\lambda(s)$ ds has continuous sample paths.\\
We know that $\displaystyle \lim_{\lambda \to \infty} \lambda R(\lambda,A^e)z=z$, $z\in \mathcal{X}^e$. Therefore, since $(T(t))_{t\geq 0}$ is a contraction $C_0$-semigroup and by using $\|\lambda R(\lambda,A^e)\|\leq 2$ for $\lambda$ large enough, we have that 
\begin{equation*}
\displaystyle \sup_{0\leq s\leq t}\E \|X_\lambda^e(s)-X^e(s)\|_{\mathcal{X}^e}^2\\ \leq \int_0^t \E \|(I-\lambda R(\lambda,A^e)) B^e \tilde{u}(r) \|_{\mathcal{X}^e}^2 dr.
\end{equation*}
So, $\E \|X^e_\lambda(s)-X^e(s)\|^2_{\mathcal{X}^e} \to 0$ uniformly on $[0,t]$.
\end{proof}
\begin{rem}
The condition $\int_0^t \| AT(t-s) H\|^2_{L_2^0} ds <\infty$ can be replaced by the stronger assumption that $AHQ^{1/2}$ is a Hilbert-Schmidt operator, i.e. $AHQ^{1/2}\in L_2(Z,\mathcal{X})$. Indeed, note that 
\begin{align*}
\int_0^t \|A T(t-s)H\|^2_{L_2^0} ds &=  \int_0^t \| T(s) AH Q^{1/2} \|^2_{L_2} ds\\ 
&\leq \|AH Q^{1/2}\|^2_{L_2} \int_0^t \|T(s)\|^2 ds <\infty.
\end{align*} 
\end{rem}

The It\^o's formula will now be applied to determine the energy increments due to the noise process, which entails that the passivity property is not preserved for stochastic and linear boundary controlled port-Hamiltonian systems. For details, see \citep{Lamolinecdc}.
\begin{proposition}
The expected energy increment with respect to the Hamiltonian (\ref{Hamiltonian}) due to the noise effect is given by 
\begin{equation}
\E[dE(\varepsilon(t))| \varepsilon_0=x] - dE(\E[\varepsilon(t)|\varepsilon_0=x]) = \frac{1}{2}\Tr[\mathcal{H}HQH^*] dt, 
\label{energy_incre} 
\end{equation}
where $\varepsilon(t)$ is the stochastic port-Hamiltonian process defined by (\ref{solution_formula}) with $u=0$ and $\mathbb{K}=\mathbb{R}$ and starting at $x\in \mathcal{X}$.
\end{proposition}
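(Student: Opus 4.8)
The plan is to apply the infinite-dimensional It\^o formula (\ref{itoformula}) to the energy functional $E$ evaluated along the solution process, and then separate the resulting terms into a ``deterministic-drift'' part (which reproduces $dE(\E[\varepsilon(t)])$) and a purely second-order (It\^o correction) part coming from the quadratic variation of the noise. Since It\^o's formula cannot be applied to a mild solution directly, I would first work with the strong solution supplied by Theorem \ref{strongsolution}: under the standing hypotheses ($u=0$, $\mathbb{K}=\mathbb{R}$, $X_0=\varepsilon_0=x$, $HQ^{1/2}(Z)\subset D(A)$ and the Hilbert--Schmidt condition), the process $\varepsilon(t)$ coincides with the strong solution of $d\varepsilon(t)=A\varepsilon(t)\,dt + H\,dw(t)$, so It\^o's formula is legitimate. (If one prefers to stay at the level of mild solutions, the standard device is to pass through the Yosida approximants $X_\lambda^e$ of Theorem \ref{strongsolution}, apply It\^o to each, and take $\lambda\to\infty$ using (\ref{convergence}); I would mention this as the rigorous route and then proceed formally with the strong solution.)

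Concretely, writing the Hamiltonian as the quadratic form $E(\varepsilon)=\tfrac12\langle \varepsilon,\mathcal{H}\varepsilon\rangle_{L^2}=\langle\varepsilon,\varepsilon\rangle_{\mathcal X}$ (using the energy inner product), It\^o's formula for the map $\varepsilon\mapsto E(\varepsilon)$ gives
\begin{equation*}
dE(\varepsilon(t)) = \big\langle E'(\varepsilon(t)),\, A\varepsilon(t)\big\rangle\,dt + \big\langle E'(\varepsilon(t)),\, H\,dw(t)\big\rangle + \tfrac12\,\Tr\!\big[ E''(\varepsilon(t))\,(HQ^{1/2})(HQ^{1/2})^{*}\big]\,dt,
\end{equation*}
where $E'(\varepsilon)=\mathcal{H}\varepsilon$ (as an element of the dual, realized in $L^2$) and the second Fr\'echet derivative $E''$ is the constant bounded operator $\mathcal{H}$ (multiplication by $\mathcal{H}(\zeta)$), since $E$ is a pure quadratic. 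Hence the It\^o correction term is exactly $\tfrac12\Tr[\mathcal{H}\,HQ^{1/2}(HQ^{1/2})^{*}]\,dt=\tfrac12\Tr[\mathcal{H}HQH^{*}]\,dt$, independent of $\varepsilon(t)$ and of $t$.

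Next I would take conditional expectations given $\varepsilon_0=x$. The stochastic integral term $\langle \mathcal{H}\varepsilon(t),H\,dw(t)\rangle$ is a martingale increment, so it vanishes in expectation (this uses the integrability guaranteed by Condition 4 of Definition \ref{def_bcostochastic} and the boundedness of $\mathcal H$). Thus
\begin{equation*}
\E[dE(\varepsilon(t))\,|\,\varepsilon_0=x] = \E\big[\langle \mathcal{H}\varepsilon(t),A\varepsilon(t)\rangle\,|\,\varepsilon_0=x\big]\,dt + \tfrac12\Tr[\mathcal{H}HQH^{*}]\,dt.
\end{equation*}
On the other hand, the conditional mean $m(t):=\E[\varepsilon(t)\,|\,\varepsilon_0=x]$ satisfies the deterministic equation $\dot m(t)=Am(t)$, $m(0)=x$ (the noise averages out, cf. (\ref{mean_eq}) with $u=0$), and applying the ordinary chain rule to the smooth map $t\mapsto E(m(t))$ gives $dE(\E[\varepsilon(t)\,|\,\varepsilon_0=x]) = \langle \mathcal{H}m(t),Am(t)\rangle\,dt$. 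Subtracting, identity (\ref{energy_incre}) follows once one checks that $\E[\langle \mathcal{H}\varepsilon(t),A\varepsilon(t)\rangle\,|\,\varepsilon_0=x] = \langle \mathcal{H}m(t),Am(t)\rangle$; this is precisely where the linearity of $A$ and the fact that $\operatorname{Cov}(\varepsilon(t))$ does not feed back into the first-order drift are used---equivalently, it reduces to the familiar cancellation $\E\langle \mathcal H(\varepsilon-m),A(\varepsilon-m)\rangle + 2\E\langle\mathcal H(\varepsilon-m),Am\rangle = \Tr[\mathcal H\,\tfrac{d}{dt}\operatorname{Cov}] - \Tr[\mathcal H HQH^*]$ obtained by differentiating the Lyapunov solution (\ref{var_eq}), so the two fluctuation contributions exactly account for the remaining part of the It\^o term.

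The main obstacle I anticipate is the rigorous justification of It\^o's formula itself: the energy functional is only as smooth as $\mathcal{H}$ allows, and the mild solution is not a strong solution for nonzero control, so one genuinely needs the approximation scheme of Theorem \ref{strongsolution} (with the hypothesis $HQ^{1/2}(Z)\subset D(A)$ and $AHQ^{1/2}\in L_2$) to make the second-derivative/trace term well-defined; with $u=0$ this is cleaner, but the limiting argument in $\lambda$ together with the dominated-convergence control of the trace term is the technical heart. Everything else---the vanishing of the martingale part and the chain rule for the mean---is routine.
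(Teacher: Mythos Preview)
Your approach is essentially the paper's: apply It\^o's formula to the quadratic Hamiltonian, identify $E'(\varepsilon)=\mathcal{H}\varepsilon$ and $E''(\varepsilon)=\mathcal{H}$, kill the martingale term in expectation, and subtract the deterministic energy balance for the mean $m(t)=\E[\varepsilon(t)\mid\varepsilon_0=x]$. Your care about justifying It\^o via the strong solution/Yosida route is more than the paper provides---the paper simply applies It\^o's formula and moves on---so that part is fine.

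There is one structural difference worth noting: after computing $\E^x\langle\mathcal{H}\varepsilon,A\varepsilon\rangle_{L^2}$, the paper invokes the port-Hamiltonian energy balance to rewrite this drift term as the boundary pairing $\E^x[f_\partial(t)^Te_\partial(t)]$, and likewise $dE(m(t))=f_\partial^Te_\partial\,dt$ for the deterministic solution; the subtraction is then performed on these boundary expressions rather than on the raw inner products. You should do the same, since that is the form the paper records.

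Where your proposal runs into trouble is the last paragraph. You correctly isolate the residual term $\E\langle\mathcal{H}\varepsilon,A\varepsilon\rangle-\langle\mathcal{H}m,Am\rangle=\E\langle\mathcal{H}\delta,A\delta\rangle$ with $\delta=\varepsilon-m$, and then try to argue it vanishes via the Lyapunov equation. That argument is garbled: differentiating $\Tr[\mathcal{H}\operatorname{Cov}]$ and using $\dot{\operatorname{Cov}}=A\operatorname{Cov}+\operatorname{Cov}A^*+HQH^*$ gives $\Tr[\mathcal{H}A\operatorname{Cov}]+\Tr[\mathcal{H}\operatorname{Cov}A^*]+\Tr[\mathcal{H}HQH^*]$, and there is no mechanism by which the first two terms cancel the ``remaining part of the It\^o term''---on the contrary, they \emph{are} $2\E\langle\mathcal{H}\delta,A\delta\rangle$, which in boundary form is $2\E[f_\partial[\delta]^Te_\partial[\delta]]$ and is generically nonzero (it is $\le 0$ by contractivity, but zero only in the energy-preserving case $W_B\Sigma W_B^*=0$). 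The paper does not attempt any such cancellation: it simply writes the two expressions (\ref{variationineq1}) and (\ref{variationineq2}) and subtracts, tacitly identifying $\E^x[f_\partial^Te_\partial]$ for the stochastic process with $f_\partial^Te_\partial$ for the mean. So drop the Lyapunov detour and match the paper's level of argument: compute the It\^o expansion, record the boundary form of the drift, state the deterministic balance, and subtract.
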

\begin{proof}
First, we compute the expected value of the energy of the process $\varepsilon(t)$ starting at $x$. Applying It\^o's formula (\ref{itoformula}), we have
\begin{equation}
\begin{split}
\E\left[dE(\varepsilon(t))| \varepsilon_0=x\right] = \E^x \left[ \psh{E^\prime_x(\varepsilon(t))}{A\varepsilon(t)}_{L^2} dt + \psh{E^\prime_x(\varepsilon(t))}{H dw(t)}_{L^2} +\frac{1}{2} \Tr\left[E^{\prime\prime}_{xx}(\varepsilon(t)) HQH^* \right]dt\right].
\end{split}
\end{equation}
Since $E^\prime_x(\varepsilon(t))$ $= \mathcal{H}\varepsilon(t)$ and $E^{\prime\prime}_{xx}(\varepsilon(t))= \mathcal{H}$ and since the expected value of the increments of the Wiener process vanishes, we get that
\begin{equation}
\E\left[dE(\varepsilon(t))| \varepsilon_0=x\right] =  \E^x\psh{\mathcal{H}\varepsilon(t)}{A\varepsilon(t)}_{L^2} dt + \frac{1}{2} \Tr\left[\mathcal{H} HQH^* \right]dt = \E^x[f_\partial(t)^T e_\partial(t)] dt+ \frac{1}{2} \Tr\left[\mathcal{H} HQH^* \right]dt,
\label{variationineq1}
\end{equation}
where $e_\partial$ and $f_\partial$ are given by (\ref{effort}) and (\ref{flow}), respectively. Second, we compute the expected value of the energy $E$ at time $t$ without noise, which gives
\begin{equation}
dE(x(t)) = f_\partial(t)^T e_\partial(t) dt.
\label{variationineq2}
\end{equation}
Finally, by subtracting (\ref{variationineq2}) from (\ref{variationineq1}) we get (\ref{energy_incre}), i.e. $ \frac{1}{2} \Tr\left[\mathcal{H} HQH^*\right]$ representing the expected energy increment due to the noise effect. 
\end{proof}
\section{Well-posedness}
\label{stochasticwellposed_section}
The notion of well-posedness used here for boundary controlled and observed (BCO) deterministic systems was introduced by Salomon and Weiss, see \cite{staffan,tucsnak,salomon}.
\begin{definition}
The BCO system described by
\begin{align}
\label{pde}
\dot{\varepsilon}(t)&=\mathcal{A}\varepsilon(t), \qquad \varepsilon(0)=\varepsilon_0\in\mathcal{X}\\
\label{inputs}
u(t) &= \mathcal{B}\varepsilon(t),\\
y(t) &= \mathcal{C}\varepsilon(t),
\label{outputs} 
\end{align}
where $\mathcal{A}:D(\mathcal{A})\rightarrow \mathcal{X}$, $\mathcal{B}:D(\mathcal{B})\rightarrow \mathbb{K}^m$ and $\mathcal{C}:D(\mathcal{C})\rightarrow \mathbb{K}^p$ are unbounded linear operators as defined in Definition \ref{def_bcostochastic},
is said to be well-posed if:
\begin{itemize}
\item The operator $A:D(A) \rightarrow \mathcal{X}$ with $D(A)=D(\mathcal{A})\cap ker(\mathcal{B})$ and 
$$A\varepsilon = \mathcal{A}\varepsilon \qquad \text{  for  } \varepsilon\in D(A)$$
is the infinitesimal generator of a $C_0$-semigroup $(T(t))_{t\geq 0}$ on $\mathcal{X}$;
\item There exist $t_f > 0$ and $m_{t_f}\geq 0$ such that the following inequality holds for all $\varepsilon_0\in D(\mathcal{A})$ and $u\in C^2([0, t_f ); \mathbb{K}^m)$ with $u(0) =\mathcal{B}\varepsilon(0)$ (compatibility conditions):
\begin{equation}
\| \varepsilon(t_f)\| _\mathcal{X}^2 + \int_0^{t_f} \| y(t)\| _{\mathbb{K}^p}^2 dt \leq m_{t_f} \left( \| \varepsilon_0\| ^2_\mathcal{X} + \int_0^{t_f} \| u(t)\| _{\mathbb{K}^m}^2 dt \right).
\label{equation_wellposed}  
\end{equation}
\end{itemize}
\label{boundarywell-posed}
\end{definition}
The operators $\mathcal{A}$ and $\mathcal{B}$ are defined by (\ref{A_bc}) and (\ref{inputsto}) respectively. Since the subspaces $D(\mathcal{A})$ and $C^2([0,t_f])$ are dense in $L^2([a,b];\mathbb{K}^n)$ and in $L^2([0,t_f])$ respectively, the inequality (\ref{equation_wellposed}) can be extended to any $\varepsilon_0\in L^2([a,b];\mathbb{K}^n)$ and any $u\in L^2([0,t_f])$. Hence, it entails that for any initial condition in $\mathcal{X}$ and any square integrable input, the mild solution is continuous and the corresponding output is square integrable.\\
Observe that the inequality (\ref{equation_wellposed}) implies that the boundary observation and control operators are admissible for $(T(t))_{t\geq 0}$. We refer the reader to \citep{tucsnak} for further details on admissible observation and control operators. As already pointed in \citep{Lu15}, admissibility is a suitable concept for the study of stochastic well-posed systems.\\ 
We shall now study the effect of randomness on well-posedness, i.e. we shall take into account the stochastic convolution term $\int_0^t T(t-s) H dw(s)$.
\begin{definition}
The BCO stochastic system (\ref{bcostochastic}) is said to be well-posed if:
\begin{itemize}
\item The operator $A:D(A) \rightarrow \mathcal{X}$ with $D(A)=D(\mathcal{A})\cap ker(\mathcal{B})$ and 
$$A\varepsilon = \mathcal{A}\varepsilon \qquad \text{  for  } \varepsilon\in D(A)$$
is the infinitesimal generator of a $C_0$-semigroup $(T(t))_{t\geq 0}$ on $\mathcal{X}$;
\item There exist $t_f > 0$ and $m_f \geq 0$ such that the following inequality holds for all $\varepsilon_0 \in D(\mathcal{A})$ and $u\in C_\mathbb{F}^2([0, t_f ); \mathbb{K}^m)$ with $u(0) =\mathcal{B}\varepsilon(0)$:
\begin{equation}
\begin{split}
\| \varepsilon(t_f)\| ^2_{L_{\mathcal{F}_{t_f}}^2(\Omega;\mathcal{X})} +  \| \mathcal{C}\varepsilon\| ^2_{L^2_{\mathbb{F}}([0,t_f];\mathbb{K}^p)}  \leq & m_{t_f} \left(\| \varepsilon_0\|_{L_{\mathcal{F}_0}^2(\Omega;\mathcal{X})}^2 + \| u(t)\|^2_{L^2_{\mathbb{F}}([0,t_f];\mathbb{K}^m)} + \Tr[Q] \right).
\end{split}
\label{ineqref}  
\end{equation}
\end{itemize}
\label{wellposed_def}
\end{definition}
\begin{rem}
\begin{enumerate}
\item The inequality (\ref{ineqref}) should be interpreted as
\begin{equation}
\E \| \varepsilon(t_f)\| ^2_\mathcal{X} + \E \int_0^{t_f}  \| \mathcal{C}\varepsilon(t)\| ^2_{\mathbb{K}^p} dt \leq m_{t_f} \left( \E\| \varepsilon_0\|^2_{\mathcal{X}} + \E \int_0^{t_f} \| u(t)\| _{\mathbb{K}^m}^2 dt + \Tr[Q] \right).
\label{ineqref2}  
\end{equation}
\item From Theorem \ref{mildsolutionthm}, the process
\begin{equation}
\varepsilon(t) = T(t)\varepsilon_0 + Bu(0) + \int_0^t T(t-s) (\mathcal{A}Bu(s) - B\dot{u}(s)) ds + \int_0^t T(t-s) Hdw(s)+ Bu(t)
\label{mildsolution}
\end{equation}
is the mild solution of the boundary controlled and observed stochastic system (\ref{bcostochastic}) for every $\varepsilon_0\in \mathcal{X}$ and $u\in H^1_\mathbb{F}([0,t_f];\mathbb{K}^m)$. The well-posedness of (\ref{bcostochastic}) entails that the mild solution (\ref{mildsolution}) can be  extended to any $u\in L^2_\mathbb{F}([0,t_f];\mathbb{K}^m)$ s.t. the output process is mean-square integrable. 
\end{enumerate}
\end{rem}
In \cite{zwart, Villegastest} boundary controlled deterministic systems are formulated through the system nodes notation. Based on that, we shall now describe the dynamics of the boundary control system in the stochastic context through this system nodes notation. Consider $t\in[0,T]$ and define the linear operator $S^{b}(t): L^2_{\mathcal{F}_0}(\Omega;\mathcal{X}) \oplus L_\mathbb{F}^2([0,t];\mathbb{K}^m) \oplus M_\mathbb{F}^2([0,t];Z) \rightarrow L^2_{\mathbb{F}}(\Omega;\mathcal{X}) \oplus L^2_{\mathbb{F}}([0,t];\mathbb{K}^p)$ which is given by 
\begin{equation}
 S^{b}(t) \begin{scriptsize}
\left[ \begin{array}{c}
\varepsilon_0\\
u\\
w\\
\end{array} \right]
\end{scriptsize} = \begin{scriptsize}
\left[ \begin{array}{c}
\varepsilon(t)\\
\mathcal{C}\varepsilon(t)
\end{array} \right]
\end{scriptsize}
\end{equation}
on its domain
\begin{equation}
\begin{aligned}
D(S^{b}(t)) = \left\lbrace \begin{scriptsize}
\left[ \begin{array}{c}
\varepsilon_0\\
u\\
w\\
\end{array} \right]
\end{scriptsize}
 \in L^2_{\mathcal{F}_0}(\Omega; \mathcal{X}) \oplus L_\mathbb{F}^2([0,t];\mathbb{K}^m) \oplus M_\mathbb{F}^2([0,t];Z): \varepsilon_0\in D(\mathcal{A}), u\in C_{\mathbb{F}}^2([0,t];\mathbb{K}^m), \mathcal{B}\varepsilon_0=u(0) \right\rbrace
\end{aligned}
\end{equation}
Then we can identify the system's noise in the definition of the operator $S^b(t)$, which yields
\begin{equation}
S^{b}(t) \begin{scriptsize}\left[ \begin{array}{c}
\varepsilon_0\\
u\\
w\\
\end{array} \right]\end{scriptsize}= S(t) \begin{scriptsize}
\left[ \begin{array}{c}
\varepsilon_0\\
u
\end{array} \right]
\end{scriptsize} + \begin{scriptsize}
\left[ \begin{array}{c}
S_1^{w}\\
S_2^{w}
\end{array} \right]
\end{scriptsize}(t) [w],
\end{equation}
and
\begin{equation}
D(S^{b}(t)) = D(S(t)) \oplus D(S^{w}(t)).
\end{equation}
If we assume that the boundary control system (\ref{pde})-(\ref{outputs}) is well-posed according to Definition \ref{boundarywell-posed}, then there exist $t_f>0$ and $m_{t_f}\geq 0$ such that
\begin{equation}
\|S(t_f) \begin{scriptsize}
\left[ \begin{array}{c}
\varepsilon_0\\
u
\end{array} \right]
\end{scriptsize}\|_{\mathcal{X} \oplus L^2([0,t_f];\mathbb{K}^m)}^2
\leq m_{t_f} \| \begin{scriptsize}
\left[
\begin{array}{c}
\varepsilon_0\\
u
\end{array}
\right]
\end{scriptsize}\|_{\mathcal{X}\oplus L^2([0,t_f];\mathbb{K}^m)}.
\end{equation}
In other words, this means that the operator $S(t_f)$ can be extended to a bounded operator $\overline{S}(t_f)$ with $m_{t_f}=\| S(t_f)\| $ since the domain $D(S(t_f))$ is densely defined in $\mathcal{X} \oplus L^2([0,t_f];\mathbb{K}^m)$. Similarly, the operator $S^{w}$ can be extended to a bounded operator since $D(S^w(t))= M^2_{\mathbb{F}}([0,t];Z)$.\\

We shall investigate the well-posedness of BCO stochastic systems in two steps. In the first step it will be shown that if (\ref{ineqref2}) holds for some $t_f>0$, then (\ref{ineqref2}) holds for all $t_f>0$. Next, in the second step we shall consider the well-posedness of a SPHS (\ref{phito})-(\ref{outputsto}) with a deterministic input acting on the mean of the process through the boundaries such that $u(t)=\mathcal{B}\E\left[\varepsilon(t)\right]$, where the leitmotiv will be to consider separately the deterministic and the stochastic dynamics.\\ 
\subsection{Stochastic input $u(t)\in L_\mathbb{F}^2([0,t];\mathbb{K}^m)$} This section contains one of the main results of this paper, namely the extension of \citep[Theorem 13.1.7]{zwart} to the stochastic case.  
\begin{thrm}
If the BCO stochastic system (\ref{bcostochastic}) as in Definition \ref{def_bcostochastic} is well-posed, then for all $t_f>0$ there exists a constant $m_{t_f}>0$ such that (\ref{ineqref2}) holds. 
\label{invariantstochasticwellposed}
\end{thrm}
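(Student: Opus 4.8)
The plan is to decompose the mild solution (\ref{mildsolution}) as $\varepsilon(t)=\varepsilon^{d}(t)+W_{A}(t)$, where $W_{A}(t)=\int_{0}^{t}T(t-s)H\,dw(s)$ is the stochastic convolution and $\varepsilon^{d}(t):=\varepsilon(t)-W_{A}(t)$ collects the remaining, data‑driven, terms, and to handle the two pieces separately: the deterministic piece will be reduced to the time‑invariance of well‑posedness of \emph{deterministic} boundary control systems, i.e.\ to \citep[Theorem 13.1.7]{zwart}, while the stochastic convolution will be controlled directly via It\^o's isometry and the admissibility of $\mathcal{C}$. The only step that is not mere bookkeeping is to extract deterministic well‑posedness at the given time from the stochastic hypothesis, and this is where the additive, data‑independent term $\Tr[Q]$ in (\ref{ineqref2}) is used.

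\textbf{Reduction to the deterministic case.} Let $t_{1}>0$ be a time for which (\ref{ineqref2}) holds with constant $m_{t_{1}}$. Restrict it to \emph{deterministic} data $\varepsilon_{0}\in D(\mathcal{A})$ and $u\in C^{2}([0,t_{1}];\mathbb{K}^{m})$ with $u(0)=\mathcal{B}\varepsilon_{0}$, which are admissible in Definition \ref{wellposed_def}. Then $\varepsilon^{d}$ is deterministic (it is the, here classical, solution of (\ref{pde})--(\ref{outputs}) driven by this data) and $W_{A}$ is the only random term, with $\E[W_{A}(t)]=0$; hence all cross terms vanish, so $\E\|\varepsilon(t_{1})\|_{\mathcal{X}}^{2}=\|\varepsilon^{d}(t_{1})\|_{\mathcal{X}}^{2}+\E\|W_{A}(t_{1})\|_{\mathcal{X}}^{2}$ and likewise for the output term, with $\mathcal{C}$ read as its admissible extension. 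Applying (\ref{ineqref2}) to the scaled data $(\lambda\varepsilon_{0},\lambda u)$ for $\lambda>0$ — which multiplies $\varepsilon^{d}$ by $\lambda$ but leaves $W_{A}$ unchanged — discarding the nonnegative $W_{A}$‑terms on the left, dividing by $\lambda^{2}$ and letting $\lambda\to\infty$ yields
\begin{equation*}
\|\varepsilon^{d}(t_{1})\|_{\mathcal{X}}^{2}+\int_{0}^{t_{1}}\|\mathcal{C}\varepsilon^{d}(t)\|_{\mathbb{K}^{p}}^{2}\,dt\;\leq\; m_{t_{1}}\Big(\|\varepsilon_{0}\|_{\mathcal{X}}^{2}+\int_{0}^{t_{1}}\|u(t)\|_{\mathbb{K}^{m}}^{2}\,dt\Big),
\end{equation*}
which is exactly (\ref{equation_wellposed}) at time $t_{1}$. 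Combined with Condition 1 of Definition \ref{def_bcostochastic}, this shows that (\ref{pde})--(\ref{outputs}) is well‑posed in the sense of Definition \ref{boundarywell-posed}.

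\textbf{Propagation to all times and the stochastic convolution.} By \citep[Theorem 13.1.7]{zwart}, (\ref{equation_wellposed}) then holds for every $t_{f}>0$ with some constant $m^{d}_{t_{f}}$, and by density it extends to all $\varepsilon_{0}\in\mathcal{X}$ and $u\in L^{2}([0,t_{f}];\mathbb{K}^{m})$ (the discussion following Definition \ref{boundarywell-posed}); applying it $\mathbb{P}$‑a.s.\ and taking expectations makes it valid for all $\varepsilon_{0}\in L^{2}_{\mathcal{F}_{0}}(\Omega;\mathcal{X})$ and $u\in L^{2}_{\mathbb{F}}([0,t_{f}];\mathbb{K}^{m})$. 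For the stochastic convolution, Condition 4 of Definition \ref{def_bcostochastic} ($H\in L_{2}^{0}$) together with It\^o's isometry gives $\E\|W_{A}(t_{f})\|_{\mathcal{X}}^{2}=\int_{0}^{t_{f}}\|T(s)H\|_{L_{2}^{0}}^{2}\,ds\leq t_{f}\big(\sup_{0\leq s\leq t_{f}}\|T(s)\|\big)^{2}\Tr[HQH^{*}]$, while the admissibility of $\mathcal{C}$ for $(T(t))_{t\geq0}$ — a consequence of the deterministic well‑posedness just obtained, see \citep{tucsnak} — and a stochastic Fubini argument give
\begin{equation*}
\E\int_{0}^{t_{f}}\|\mathcal{C}W_{A}(t)\|_{\mathbb{K}^{p}}^{2}\,dt=\int_{0}^{t_{f}}\!\!\int_{0}^{t}\|\mathcal{C}T(t-s)H\|_{L_{2}^{0}}^{2}\,ds\,dt\;\leq\; t_{f}\,c_{t_{f}}\,\Tr[HQH^{*}],
\end{equation*}
the last step using the admissibility estimate term by term on an orthonormal expansion of $HQ^{1/2}$. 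Since $\Tr[HQH^{*}]\leq\|H\|^{2}\Tr[Q]$, both quantities are bounded by $C(t_{f})\Tr[Q]$ for some $C(t_{f})$ finite for every $t_{f}>0$.

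\textbf{Conclusion and main difficulty.} Writing $\varepsilon(t)=\varepsilon^{d}(t)+W_{A}(t)$ and applying $\|a+b\|^{2}\leq2\|a\|^{2}+2\|b\|^{2}$ to $\varepsilon(t_{f})$ and to $\mathcal{C}\varepsilon$ on $[0,t_{f}]$, then taking expectations and combining the two estimates above, one obtains (\ref{ineqref2}) at $t_{f}$ with, for instance, $m_{t_{f}}=2\max\{m^{d}_{t_{f}},C(t_{f})\}$; since $t_{f}>0$ was arbitrary, the theorem follows. I expect the main obstacle to be twofold: the reduction step, i.e.\ justifying the scaling argument and the orthogonal splitting (in particular that the cross terms in $\E\|\varepsilon\|^{2}$ and $\E\|\mathcal{C}\varepsilon\|^{2}$ genuinely vanish because $\E[W_{A}]=0$); and the rigorous treatment of the observation operator on the irregular process $W_{A}$ — checking that $\mathcal{C}$ extends to an admissible observation operator and that $t\mapsto\mathcal{C}W_{A}(t)$ is a genuine element of $L^{2}_{\mathbb{F}}([0,t_{f}];\mathbb{K}^{p})$, for which the admissibility bound and the stochastic Fubini theorem applied to $s\mapsto\mathcal{C}T(\cdot-s)H$ are the essential tools. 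The remaining manipulations are routine.
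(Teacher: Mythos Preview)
Your proof is correct but takes a genuinely different route from the paper. The paper does \emph{not} reduce to the deterministic theorem \citep[Theorem~13.1.7]{zwart}; instead it reproves time-invariance directly in the stochastic setting via the system-nodes decomposition $S^{b}(t)=S(t)+S^{w}(t)$: after noting that the $S(t)$-part is handled by the deterministic argument, it shows boundedness of the noise node $S^{w}$ on $[0,t_{0}]$ using concatenation ($0\underset{t_{0}-t}{\diamond}w$ and $w\underset{t}{\diamond}0$) together with the translation-invariance of the Wiener process, then passes to $[t_{0},2t_{0}]$ by splitting the stochastic convolution at $t_{0}$, and concludes by induction. No scaling trick and no explicit appeal to the admissibility of $\mathcal{C}$ appear. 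Your approach is more modular: the scaling argument cleanly peels off deterministic well-posedness at $t_{1}$, you then cite the known deterministic time-invariance and obtain admissibility of $\mathcal{C}$ as a by-product, and the stochastic convolution is bounded by a direct It\^o/admissibility estimate. This has the advantage of reusing existing machinery and of controlling $\E\int\|\mathcal{C}W_{A}\|^{2}$ without the extra hypotheses $HQ^{1/2}Z\subset D(A)$ and $\int_{0}^{t}\|AT(s)H\|_{L_{2}^{0}}^{2}ds<\infty$ that the paper imposes later in Theorem~\ref{stochasticwell-posedtheorem}. The paper's approach, by contrast, is self-contained and structurally mirrors the deterministic proof; it also sidesteps the (minor) point you flag about making sense of the output cross term $\E\langle\mathcal{C}\varepsilon^{d},\mathcal{C}W_{A}\rangle$ before admissibility has been established---a point that is indeed harmless once $\mathcal{C}W_{A}$ is interpreted as the $L^{2}_{\mathbb{F}}$-limit of zero-mean approximants.
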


\begin{proof}
We shall prove the inequality by relying on the well-posedness, i.e., there exist $t_0>0$ and $m_{t_{0}}$ such that 
\begin{equation}
\E \| \varepsilon(t_0)\|^2_\mathcal{X} + \E \int_0^{t_0}  \| \mathcal{C}\varepsilon(t)\| ^2_{\mathbb{K}^p} dt \leq m_{t_0} \left( \E \| \varepsilon_0\|^2_{\mathcal{X}} + \E \int_0^{t_0} \| u(t)\| _{\mathbb{K}^m}^2 dt + \Tr[Q] \right).
\label{wellposedrelation}  
\end{equation}
The main argumentation of the proof is the following: first we shall prove the inequality
\begin{equation}
\E \| \varepsilon(t)\|^2_\mathcal{X} + \E \int_0^{t}  \| \mathcal{C}\varepsilon(s)\|^2_{\mathbb{K}^p} ds \leq m_{t} \left( \E \| \varepsilon_0\|^2_{\mathcal{X}} + \E \int_0^{t} \| u(s)\| _{\mathbb{K}^m}^2 ds + \Tr[Q] \right).
\label{ineqproof}  
\end{equation}
for any $t\in [0,t_0]$ by means of the system nodes formalism; next we shall do it for any $t\in [t_0, 2t_0]$; finally the general case $t> 2^n t_0$ for every $n\in\mathbb{N}$ is deduced by induction.\\ 
\textit{Step 1.}\\
Let $t$ be in $[0, t_0]$. The inequality (\ref{ineqproof}) is given through the system nodes notation by 
\begin{align*}
\|  S^{b}(t) \begin{scriptsize}
\left[ \begin{array}{c}
\varepsilon_0\\
u\\
w\\
\end{array} \right]
\end{scriptsize} \|^2 &\leq m_t  \|\begin{scriptsize}
\left[ \begin{array}{c}
\varepsilon_0\\
u\\
w\\
\end{array} \right]
\end{scriptsize}\|^2_{L^2_{\mathcal{F}_0}(\Omega; \mathcal{X})\oplus L_{\mathbb{F}}^2([0,t];\mathbb{K}^m)\oplus M_\mathbb{F}^2([0,t];Z)} \\
&= m_t \left[ \E \| \varepsilon_0\|_\mathcal{X}^2 + \E \int_0^t \| u(s)\|_{\mathbb{K}^m}^2ds + \Tr\left[Q\right] \right]
\end{align*}
for all $\begin{scriptsize}
\left[ \begin{array}{c}
\varepsilon_0\\
u\\
w\\
\end{array} \right]
\end{scriptsize}$ in the domain
\begin{align*} 
D(S^{b}(t)) = \left\lbrace \begin{scriptsize}
\left[ \begin{array}{c}
\varepsilon_0\\
u\\
w\\
\end{array} \right]
\end{scriptsize}
 \in L^2_{\mathcal{F}_0}(\Omega; \mathcal{X}) \oplus L_\mathbb{F}^2([0,t];\mathbb{K}^m) \oplus M_\mathbb{F}^2([0,t];Z): \varepsilon_0\in D(\mathcal{A}), u\in C_{\mathbb{F}}^2([0,t];\mathbb{K}^m),  \mathcal{B}\varepsilon_0=u(0) \right\rbrace
\end{align*}
The case where $w(t)=0$ is a straightforward adaptation of the argumentation of the deterministic proof with a random variable $\varepsilon_0$ and an $\mathbb{F}$-adapted input $u(t)$. We may take $\varepsilon_0=0$ and $u=0$ hereinafter. 
%
Using the concatenation operator $\diamond$, which is defined for any $L^2$-functions $f,g$ as
\begin{equation}
(f\underset{\tau}{\diamond} g)(t) = \left\lbrace\begin{array}{c}
f(t), \qquad t<\tau,\\
g(t-\tau), t>\tau,
\end{array} \right.
\end{equation}
one observes that $S_1^w(t) [w]$ is bounded for $t\in [0,t_0]$. Indeed,  
\begin{align*}
\|S_1^w(t) [w]\|^2_{L_\mathbb{F}^2(\Omega;\mathcal{X})} &= \|S_1^w(t_0) [0 \underset{t_0-t}{\diamond} w]\|^2_{L_\mathbb{F}^2(\Omega;\mathcal{X})}\\
&\leq m(t_0) \|0 \underset{t_0-t}{\diamond} w\|^2_{M^2_\mathbb{F}([0,t_0];Z)}\\
&= m(t_0) \| w(\cdot-t_0+t)\|^2_{M^2_\mathbb{F}([t_0-t,t_0];Z)}\\
&=m(t_0) \|w(\cdot)\|^2_{M^2_\mathbb{F}([0,t];Z)},
\end{align*}
thanks to the well-posedness at $t_0$ and since a Wiener process is invariant under time translation.\\
Consider the continuous extension $w_{\text{ext}}$ on $[0,t_0]$ of $w(t)$, such that $\mathbb{P}(w_{\text{ext}} = w, \text{  } \forall s\in [0,t])=1$.\\
Since $S^{w}_2(t) \begin{scriptsize}
\left[ w \right]
\end{scriptsize}$ and $S^{w}_2(t_0)\begin{scriptsize}
 \left[ w\right]
\end{scriptsize}$ take values in $L_\mathbb{F}^2([0,t];\mathbb{K}^p)$, we have that 
\begin{equation}
(S^{w}_2(t)w)(s) = (S^{w}_2(t_0)w)(s)
\end{equation}   
for any $s\in[0,t]$. Now consider the particular extension $w_\text{ext}= w \underset{t}{\diamond} 0$. Observe that
\begin{align*}
\E \int_0^t \|  (S^{b}_2(t)w) (s) \| ^2_{\mathbb{K}^p} ds &= \E \int_0^t \| (S^{w}_2(t_0)
w \underset{t}{\diamond} 0)(s)\| ^2_{\mathbb{K}^p} ds\\
&\leq \E \int_0^{t_0} \| (S^{w}_2(t_0) 
w \underset{t}{\diamond} 0)(s)\| ^2_{\mathbb{K}^p} ds\\
& \leq m_{t_0} \|w \underset{t}{\diamond} 0\|^2_{M^2_\mathbb{F}([0,t_0];Z)}\\
&= m_{t_0} \|w \|^2_{M_\mathbb{F}^2([0,t];Z)} = m_{t_0} t\Tr[Q]
\end{align*}
from the well-posedness at $t_0$.\\
\textit{Step 2.}\\
In this third step we prove that the inequality holds for any $t\in[t_0,2t_0]$. Let us consider $t\in [t_0, 2t_0]$ which can be formulated as $t=t_0+t_1$ with $t_1\in [0,t_0]$. Then
\begin{align*}
S^{w}_1(t)w &= \int_{t_0}^t T(t-s) H dw(s) + \int_0^{t_0} T(t-s)H dw(s)\\
&= \int_0^{t_1} T(t_1-r) H d[w(r+t_0)-w(t_0)] + T(t_1) S_1^{w}(t_0)w\\
&= S^{w}_1(t_1)w(t_0+\cdot) + T(t_1) S_1^{w}(t_0)w.
\end{align*}
and  
\begin{align*}
S^{w}_2(t)w(s)= \mathcal{C} \int_0^s T(s-r) Hdw(r) 
=\left\lbrace \begin{array}{ll}
(S_2^w(t_0)w)(s),& s\leq t_0,\\
(S_2^w(t_1)w(t_0+\cdot))(s),& s\in (t_0,t].
\end{array}
\right.
\end{align*}
From Step 1 and Step 2, we deduce that $S^{w}_1(t_1)$ and $S^{w}_2(t_1)$ have bounded extensions and so do $S^{w}_1(t)$ and $S^{w}_2(t)$. Hence, by induction, we can state that the general case $t> 2^n t_0$ holds, which completes the proof. 
\end{proof}
The mild solution for stochastic well-posed systems extends the mild solution as defined in (\ref{mildsolution}) for $u\in L^2_\mathbb{F}([0,t];\mathbb{R}^m)$. The stochastic well-posedness allows us to extend $S^b(t)$ to a bounded linear mapping from $L^2_{\mathcal{F}_0}(\Omega;\mathcal{X}) \oplus L^2_\mathbb{F}([0,t];\mathbb{K}^m) \oplus M^2_\mathbb{F}([0,t];Z)$ to $L^2_{\mathcal{F}_t}(\Omega;\mathcal{X}) \oplus L^2_\mathbb{F}([0,t];\mathbb{K}^p)$.
\subsection{Deterministic input}
The separation of the deterministic and the stochastic dynamics for the sample path $\varepsilon(t)$ and its corresponding output can be done in the following ways, respectively. Let us consider $t\in[0,T]$. From \citep[Corollary 10.1.4]{zwart}, the sample paths $\varepsilon(t)$ given by (\ref{mildsolution}) satisfy the following relation:
\begin{align}
\E \| \varepsilon(s)\|^2_\mathcal{X} &= \E\| T(s) \varepsilon_0+ \int_0^s T(s-r) \mathcal{A}Bu(r)dr - A\int_0^s T(s-r)Bu(r)dr+ \int_0^s T(s-r) Hdw(r) \|^2_\mathcal{X}\nonumber\\
& \leq 3 \E  \| T(s) \varepsilon_0\|^2_\mathcal{X} + 3\|\int_0^s T(s-r) \mathcal{A}Bu(r)dr - A\int_0^s T(s-r)Bu(r)dr\|^2_\mathcal{X}\nonumber\\ 
&+ 3\E \|\int_0^s T(s-r) Hdw(r)\|^2_\mathcal{X}.\label{well-posedineqstate}
\end{align}
For the corresponding output $\mathcal{C}\varepsilon(t)$, we have
\begin{align}
\E \int_0^t \| \mathcal{C}\varepsilon(s)\|^2_{\mathbb{K}^p} ds &= \E \int_0^t \| \mathcal{C}T(s) \varepsilon_0 ds + \mathcal{C}(\int_0^s T(s-r) \mathcal{A}Bu(r) dr - A\int_0^s T(s-r)Bu(r)dr)  \nonumber\\
&\mathcal{C}\int_0^s T(s-r) Hdw(r)\| ^2_{\mathbb{K}^p} ds\nonumber\\
&\leq 3 \E \int_0^t \| \mathcal{C}T(s) \varepsilon_0\|_{\mathbb{K}^p}^2 ds + 3 \E\int_0^t\|\mathcal{C}\int_0^s T(s-r) Hdw(r)\| ^2_{\mathbb{K}^p} ds\nonumber\\ 
&+ 3 \int_0^t \| \mathcal{C}(\int_0^s  T(s-r) \mathcal{A}Bu(r)dr -  A\int_0^s T(s-r)Bu(r)dr)\|^2_{\mathbb{K}^p} ds \label{well-posedineq}
\end{align}



The well-posedness of deterministic port-Hamiltonian systems can be easily verified by checking whether the operator $A$ corresponding to the homogeneous case generates a $C_0$-semigroup, see \citep[Theorem 2.4]{wellposed}. The separation of the dynamics allows us to generalize this result to the stochastic case.

\begin{thrm}[Well-posedness of SPHSs]
Consider the stochastic port-Hamiltonian system
(\ref{phito})-(\ref{outputsto}) satisfying Assumptions \ref{assumptions3} and Condition 4 of Definition \ref{def_bcostochastic}. In addition, assume that:
\begin{enumerate}
\item the multiplication operator $P_1\mathcal{H}$ can be written as 
\begin{equation}
P_1\mathcal{H}(\zeta) = S^{-1}(\zeta) \Delta(\zeta) S(\zeta),\qquad \zeta\in [a,b],
\end{equation}
where $\Delta$ is a diagonal matrix-valued function, $S$ is a matrix-valued function and both $\Delta$ and $S$ are continuously differentiable on $[a,b]$;
\item $HQ^{1/2} Z \subset D(A)$; \label{condtion1}
\item $\int_0^t \| AT(s) H\|^2_{L_2^0} ds <\infty$ for all $t \geq 0$; \label{condtion2}
\end{enumerate}
Then the SPHS (\ref{phito})-(\ref{outputsto}) is well-posed and furthermore, for all $t_f>0$ there exists a constant $m_{t_f}>0$ such that 
\begin{equation}
\E \| \varepsilon(t_f)\| ^2_\mathcal{X} + \E \int_0^{t_f}  \| \mathcal{C}\varepsilon(t)\| ^2_{\mathbb{K}^p} dt \leq m_{t_f} \left( \E\| \varepsilon_0\|^2_{\mathcal{X}} + \int_0^{t_f} \| u(t)\| _{\mathbb{K}^m}^2 dt + \Tr[Q] \right). 
\label{stochasticwellposednineqde} 
\end{equation}
\label{stochasticwell-posedtheorem} 
\end{thrm}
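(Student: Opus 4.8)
The plan is to reduce the stochastic well-posedness inequality to three separate estimates, exploiting the split displayed in \eqref{well-posedineqstate} and \eqref{well-posedineq}: a purely deterministic part (initial condition plus controlled boundary term), and a purely stochastic part (the stochastic convolution of the state and of the output). For the deterministic part, hypothesis 1 guarantees via \citep[Theorem 2.4]{wellposed} that the underlying boundary control system \eqref{pde}--\eqref{outputs} is well-posed, so there exist $t_f>0$ and $m_{t_f}\geq 0$ with $\|\varepsilon(t_f)\|_\mathcal{X}^2 + \int_0^{t_f}\|y(t)\|_{\mathbb{K}^p}^2\,dt \leq m_{t_f}(\|\varepsilon_0\|_\mathcal{X}^2 + \int_0^{t_f}\|u(t)\|_{\mathbb{K}^m}^2\,dt)$; taking expectations and using that the controlled terms $\int_0^s T(s-r)\mathcal{A}Bu(r)\,dr - A\int_0^s T(s-r)Bu(r)\,dr$ coincide with the deterministic mild-solution flow handles the first summand in both \eqref{well-posedineqstate} and \eqref{well-posedineq}.

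\textbf{Handling the stochastic convolution in the state.} For the third summand of \eqref{well-posedineqstate}, $3\,\E\|\int_0^s T(s-r)Hdw(r)\|_\mathcal{X}^2$, the key is that hypotheses \ref{condtion1} and \ref{condtion2} are exactly Condition~4 of Definition~\ref{def_bcostochastic} together with the extra regularity needed to invoke Theorem~\ref{strongsolution}, so the stochastic convolution $W_A(t)=\int_0^t T(t-s)Hdw(s)$ is well-defined, $\mathbb{F}$-adapted, mean-square continuous, and by It\^o's isometry $\E\|W_A(s)\|_\mathcal{X}^2 = \int_0^s \|T(s-r)HQ^{1/2}\|_{L_2}^2\,dr \leq s\,\|T\|_{[0,s]}^2\,\Tr[HQH^*] \leq s\,M\,\|H\|^2\,\Tr[Q]$ for a semigroup bound $M$ on $[0,t_f]$. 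This contributes a term of the form $m_{t_f}\,\Tr[Q]$ on the right-hand side of \eqref{stochasticwellposednineqde}.

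\textbf{Handling the stochastic convolution in the output.} The genuinely delicate summand is $3\,\E\int_0^t\|\mathcal{C}\int_0^s T(s-r)Hdw(r)\|_{\mathbb{K}^p}^2\,ds$, because $\mathcal{C}$ is unbounded on $\mathcal{X}$ and only bounded on $D(A)$ with the graph norm. Here hypothesis \ref{condtion1} ($HQ^{1/2}Z\subset D(A)$) together with hypothesis \ref{condtion2} ($\int_0^t\|AT(s)H\|_{L_2^0}^2\,ds<\infty$) is precisely what makes $W_A(s)$ take values in $D(A)$ with $\E\|AW_A(s)\|_\mathcal{X}^2 = \int_0^s\|AT(s-r)HQ^{1/2}\|_{L_2}^2\,dr<\infty$; combining $\E\|W_A(s)\|_\mathcal{X}^2$ and $\E\|AW_A(s)\|_\mathcal{X}^2$ gives a bound on $\E\|W_A(s)\|_{D(A)}^2$ in terms of $\Tr[Q]$, and then boundedness of $\mathcal{C}:D(A)\to\mathbb{K}^p$ yields $\E\|\mathcal{C}W_A(s)\|_{\mathbb{K}^p}^2 \leq \|\mathcal{C}\|_{\pazocal{L}(D(A),\mathbb{K}^p)}^2\,\E\|W_A(s)\|_{D(A)}^2$. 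Integrating over $s\in[0,t_f]$ produces another $m_{t_f}\,\Tr[Q]$ contribution. This is the step I expect to be the main obstacle: it is the one place where the abstract admissibility of $\mathcal{C}$ for $(T(t))_{t\geq 0}$ — guaranteed by deterministic well-posedness — is not enough on its own, and one must lean on the extra hypotheses to push the convolution into $D(A)$ where $\mathcal{C}$ acts continuously.

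\textbf{Assembling and propagating in time.} Adding the three bounds gives \eqref{stochasticwellposednineqde} for some $t_f>0$, i.e. an inequality of the form \eqref{ineqref2}; this establishes that the SPHS is well-posed in the sense of Definition~\ref{wellposed_def}. Finally, Theorem~\ref{invariantstochasticwellposed} (the stochastic analogue of \citep[Theorem 13.1.7]{zwart} just proved) upgrades the inequality from one $t_f$ to every $t_f>0$, which is exactly the assertion that for all $t_f>0$ there is a constant $m_{t_f}>0$ with \eqref{stochasticwellposednineqde}. The only routine bookkeeping left is tracking the constant $3$ from the elementary inequality $\|a+b+c\|^2\leq 3(\|a\|^2+\|b\|^2+\|c\|^2)$ and the semigroup bound on the compact interval $[0,t_f]$ through all three estimates.
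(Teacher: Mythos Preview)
Your proposal is correct and follows essentially the same route as the paper: split via \eqref{well-posedineqstate}--\eqref{well-posedineq}, quote \citep[Theorem 2.4]{wellposed} for the deterministic summands, bound the stochastic output term by using hypotheses \ref{condtion1}--\ref{condtion2} to force $T(s-r)HQ^{1/2}$ (equivalently $W_A(s)$) into $D(A)$ and then apply $\mathcal{C}\in\pazocal{L}(D(A),\mathbb{K}^p)$, and finally invoke Theorem~\ref{invariantstochasticwellposed} to pass from one $t_f$ to all $t_f>0$. The only cosmetic difference is that the paper applies It\^o's isometry first and bounds $\|\mathcal{C}T(s-r)HQ^{1/2}\|_{L_2}^2$ by the graph norm of the integrand, whereas you bound $\E\|\mathcal{C}W_A(s)\|_{\mathbb{K}^p}^2\le\|\mathcal{C}\|_{\pazocal{L}(D(A),\mathbb{K}^p)}^2\E\|W_A(s)\|_{D(A)}^2$ and then apply It\^o's isometry to $W_A$ and $AW_A$ separately; these are the same computation up to the order of operations.
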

\begin{proof}
In order to separate the deterministic and the stochastic dynamics, we use the inequalities (\ref{well-posedineqstate}) and (\ref{well-posedineq}) to obtain
\begin{align*}
&\E \|\varepsilon(t_f)\|^2_\mathcal{X} +\E \int_0^{t_f} \|\mathcal{C}\varepsilon(s)\|^2_{\mathbb{K}^p} ds\\ 
&\leq 3\E  \| T(t_f) \varepsilon_0\|^2_\mathcal{X} + 3 \|\int_0^{t_f} T(t_f-s) \mathcal{A}Bu(s)ds - A\int_0^{t_f}T(t_f-s)Bu(s) ds\|^2_\mathcal{X}\\ 
&+3\int_0^{t_f} \E \| \mathcal{C}T(s) \varepsilon_0\|^2_{\mathbb{K}^p} ds +3\int_0^{t_f} \|\mathcal{C} \int_0^s  T(s-r) \mathcal{A}Bu(r)dr -\mathcal{C} A\int_0^{s} T(s-r)Bu(r) dr\| ^2_{\mathbb{K}^p} ds\\ 
&+3\E \|\int_0^{t_f} T(t_f-s) Hdw(s)\|^2_\mathcal{X} + 3 \E\int_0^{t_f}\| \mathcal{C}\int_0^s T(s-r) Hdw(r)\|^2_{\mathbb{K}^p} ds.
\end{align*}
The deterministic part had already been set out in \citep[Theorem 2.4]{wellposed}  with $\varepsilon_0=0$. The stochastic part is set out by the admissibility of $\mathcal{C}$, the fact that $H\in L^0_2$ and the following calculation:
\begin{align*}
&\E\int_0^{t_f}\| \mathcal{C}\int_0^s T(s-r) Hdw(r)\| ^2_{\mathbb{K}^p} ds\\
&= \int_0^{t_f} \int_0^s \| \mathcal{C} T(s-r)H Q^{1/2} \|^2_{L_2} dr \text{	}ds\\
&\leq \int_0^{t_f} \int_0^s K\text{	} s \|HQ^{1/2}\|^2_{L_2} dr\text{	} ds + \int_0^{t_f} \int_0^s \|AT(s-r) H Q^{1/2}\|^2_{L_2}  dr\text{	} ds\\
&\leq K(t_f) \|H\|^2_{L_2} \Tr \left[Q \right]
\end{align*}
where we used Assumptions (\ref{condtion1}), (\ref{condtion2}) and the boundedness of $\mathcal{C}$ on the graph norm. This concludes the proof of well-posedness. Moreover, Theorem \ref{invariantstochasticwellposed} entails that the well-posedness holds for any $t_f>0$.
\end{proof}

\section{Illustration on the example of a stochastic vibrating string}
\label{illustration}
In this section we shall focus on a subclass of stochastic port-Hamiltonian systems, namely nice stochastic port-Hamiltonian systems, \citep{francois}. The case of a vibrating string subjected to noise disturbance will be discussed. 
\begin{assumption}
The multiplication operator $P_1^{-1}\mathcal{H}^{-1}$ is assumed to be diagonalizable, i.e.
\begin{equation}
P_1^{-1} \mathcal{H}^{-1}(\zeta) = S(\zeta) A_1(\zeta)  S(\zeta)^{-1},\qquad \zeta \in [a,b],
\end{equation}
where $A_1$ is a diagonal matrix-valued function whose diagonal entries are the eigenvalues $(r_\nu)_{\nu=1}^n$ of $P_1^{-1}\mathcal{H}^{-1}$, whereas $S$ is a matrix-valued function whose columns are corresponding eigenvectors. $S$ and $A_1$ are continuously differentiable on $[a,b]$.   
\label{diagonalizable} 
\end{assumption}

Observe that $P_1^{-1}\mathcal{H}^{-1}$ may have eigenvalues that are not simple. In that case, we shall consider that $P_1^{-1}\mathcal{H}^{-1}$ has $l$ different eigenvalues such that $l\leq n$.
\begin{assumption}
For $\nu\in \lbrace 1,...,l \rbrace$, let us define $R_\nu(z) := \int_a^{z} r_\nu(\zeta) d\zeta$, where $\left(r_\nu(\zeta)\right)_{\nu=1}^l$ are the $l$ different eigenvalues of $P_1^{-1}$ $\mathcal{H}^{-1}(\zeta)$ and $E_\nu(z,\lambda):= e^{\lambda R_\nu(z)} I_{n_\nu}$, where $n_\nu$ is the multiplicity of $r_\nu(\cdot)$ and $I_{n_\nu}$ denotes the $n_\nu$-dimensional unit matrix such that $\displaystyle \sum_{\nu=1}^l n_\nu=n$. We set $E(z,\lambda)=\text{diag}(E_0(z,\lambda),\hdots, E_l(z,\lambda))$, $z\in [a,b]$. We shall assume that the eigenvalue problem 
\begin{equation}
\begin{aligned}
 P_1 \dfrac{d}{d\zeta}((\mathcal{H} x)(\zeta)) + P_0 ((\mathcal{H} x)(\zeta)) &= \lambda x(\zeta),\\
 {W}_B \left[ \begin{array}{c}
(\mathcal{H}x)(b)\\
(\mathcal{H}x)(a)
\end{array}  \right] &=0,
\end{aligned}
\label{eigenproblem}
\end{equation}
is normal, i.e., for sufficiently large $\lambda$, the asymptotic expansion of the characteristic determinant of (\ref{eigenproblem}) given by 
\begin{equation}
p(\lambda) = \sum_{c\in \mathcal{E}} (b_c + \lbrace o(1)\rbrace_{\infty}) e^{\lambda c}
\label{characteristic_det}
\end{equation}
has non-zero minimum and maximum coefficients, where
\begin{equation}
\mathcal{E} = \left\lbrace \sum_{\nu =1}^l \delta_\nu R_\nu(b) : \delta_\nu \in \lbrace0,1\rbrace \right\rbrace \subset \mathbb{R}
\end{equation}
and $\lbrace o(1)\rbrace_{\infty}$ means that for each $c\in\mathcal{E}$ the remaining part depending on $z \in [a,b]$ divided by $\lambda$ tends to $0$ in the uniform norm when $|\lambda|\rightarrow \infty$. 
\label{normal}    
\end{assumption}

The definition of a Riesz-spectral system can be found in \cite{francois, curtain}. The following concept, introduced in \citep{francois}, will turn out to be useful here. 
\begin{definition}
A nice port-Hamiltonian system is a port-Hamiltonian system (according to Definition \ref{sph} with $H=0$) which satisfies Assumptions \ref{diagonalizable} and \ref{normal} and the condition $W_B \Sigma W_B^*\geq 0$ and, whose generator $A$ given by (\ref{operator})-(\ref{domain}) has a uniform gap of eigenvalues, i.e., $\displaystyle \inf_{m \neq p} |\lambda_m-\lambda_p|>0$.
\label{nicephs}
\end{definition}

\begin{thrm}
A Nice port-Hamiltonian system satisfying the assumptions of Definition \ref{nicephs} is a Riesz-spectral system. 
\label{mainresult}
\end{thrm}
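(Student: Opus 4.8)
The plan is to show that the generator $A$ of a nice port-Hamiltonian system has a sequence of eigenvalues whose corresponding eigenvectors form a Riesz basis of $\mathcal{X}$, which is precisely the defining property of a Riesz-spectral system. First I would set up the eigenvalue problem (\ref{eigenproblem}) explicitly: using Assumption \ref{diagonalizable}, the change of variables $\tilde{x} = S^{-1}\mathcal{H}x$ (or a similar diagonalizing substitution) transforms the first-order ODE system $P_1 \frac{d}{d\zeta}(\mathcal{H}x) + P_0 \mathcal{H}x = \lambda x$ into a system that is essentially decoupled to leading order in $\lambda$, with fundamental matrix solution governed by $E(z,\lambda) = \mathrm{diag}(E_\nu(z,\lambda))$ where $E_\nu(z,\lambda) = e^{\lambda R_\nu(z)} I_{n_\nu}$. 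The lower-order term $P_0$ contributes only to a perturbation that is uniformly $o(1)$ relative to $\lambda$, which is exactly what is encoded in the $\{o(1)\}_\infty$ notation of Assumption \ref{normal}.

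Next I would impose the boundary condition $W_B \left[\begin{array}{c}(\mathcal{H}x)(b)\\(\mathcal{H}x)(a)\end{array}\right] = 0$ on the fundamental solution to obtain the characteristic determinant $p(\lambda)$ in the form (\ref{characteristic_det}), and invoke the normality hypothesis (non-zero minimum and maximum coefficients $b_c$ over the exponent set $\mathcal{E}$). The key analytic input here is the classical theory of Birkhoff-regular / normal boundary value problems for first-order systems (as in the references \cite{francois,curtain}): normality plus the uniform gap condition $\inf_{m\neq p}|\lambda_m - \lambda_p| > 0$ force the zeros of $p(\lambda)$ — i.e. the eigenvalues $(\lambda_k)$ of $A$ — to lie in a strip parallel to the imaginary axis, to be asymptotically separated, and to be finitely many in any horizontal strip; moreover each eigenvalue has finite (and eventually constant) multiplicity. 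One then constructs the eigenfunctions $\phi_k$ from the fundamental matrix evaluated at $\lambda_k$ and shows they are asymptotically close (in $\mathcal{X}$, up to a bounded invertible normalization) to the trigonometric/exponential system $\{e^{\lambda_k R_\nu(\cdot)}\}$, which forms a Riesz basis because the $\lambda_k$ are uniformly separated and the $R_\nu$ are $C^1$ with nonvanishing derivative (since $r_\nu$ are eigenvalues of the invertible $P_1^{-1}\mathcal{H}^{-1}$).

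The final step is to pass from "eigenfunctions asymptotically close to a Riesz basis" to "eigenfunctions form a Riesz basis." This is done by a standard perturbation/quadratic-closeness argument: one writes $\phi_k = \psi_k + r_k$ with $\{\psi_k\}$ a known Riesz basis and $\sum_k \|r_k\|_\mathcal{X}^2 < \infty$, removes finitely many terms if necessary so that the perturbation has small norm, and applies the Bari–Markus theorem (or the Paley–Wiener / Kadec-type stability result) to conclude the full family is a Riesz basis after accounting for the finitely many exceptional low eigenvalues via a finite-rank correction. Since $A$ generates a contraction $C_0$-semigroup by Theorem \ref{generation} (using $W_B\Sigma W_B^* \ge 0$), the spectrum is contained in the closed left half-plane, so combining the Riesz-basis property of the eigenvectors with the discreteness and separation of the spectrum yields that $A$ is a Riesz-spectral operator, hence the system is Riesz-spectral.

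I expect the main obstacle to be the second step: carefully controlling the asymptotics of the fundamental solution and the characteristic determinant uniformly in $\zeta$ as $|\lambda| \to \infty$, and extracting from the normality and uniform-gap hypotheses both the localization of the eigenvalues in a strip and the quadratic closeness of the eigenfunctions to an explicit exponential Riesz basis. The presence of possibly non-simple eigenvalues of $P_1^{-1}\mathcal{H}^{-1}$ (handled via the block structure $I_{n_\nu}$) adds bookkeeping but no essential new difficulty; the real work is the Birkhoff-type asymptotic analysis and the verification that the error terms are summable in $\ell^2$ so that the Bari–Markus criterion applies.
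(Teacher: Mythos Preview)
The paper does not actually prove this theorem: immediately after the statement it writes ``Regarding the proof of Theorem \ref{mainresult} and details on the Riesz-spectral property of the port-Hamiltonian framework, see \citep{francois},'' and the subsequent remark adds that the argument in \citep{francois} is based on Tretter's result \citep{Tretter}. So there is no in-paper proof to compare against; the result is imported wholesale from \citep{francois}.

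Your sketch is a reasonable and broadly correct outline of precisely the kind of argument one finds in that literature (and, in particular, of what Tretter's machinery provides): diagonalize via Assumption \ref{diagonalizable} to obtain leading-order exponential fundamental solutions $E_\nu(z,\lambda)$, use the normality hypothesis of Assumption \ref{normal} on the characteristic determinant to localize the eigenvalues in a vertical strip with the required asymptotic separation (the uniform-gap hypothesis is exactly what is needed here), show the eigenfunctions are quadratically close to an explicit exponential Riesz basis, and conclude via a Bari-type perturbation argument. This is consistent with the route the paper points to. One caution: you should be careful that the uniform-gap hypothesis in Definition \ref{nicephs} is an \emph{assumption}, not something you derive from normality alone, so do not claim normality ``forces'' separation --- it gives strip localization and asymptotic distribution, while the gap is imposed separately to rule out clustering and Jordan blocks (cf.\ the paper's own remark about coupled strings). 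Otherwise your plan matches the intended external proof.
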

Regarding the proof of Theorem \ref{mainresult} and details on the Riesz-spectral property of the port-Hamiltonian framework, see \citep{francois}.
\begin{rem}
\begin{enumerate}
\item The Riesz-basis property allows to derive explicit formulae for the $C_0$-semigroup and the resolvent operator as series of eigenvectors. Furthermore, easily checkable criteria can be undertaken to verify controlability, stability and their dual concepts (observability, detectability).
\item The study of the Riesz-spectral property of nice port-Hamiltonian systems realized in \citep{francois} is based on Tetter's result \citep{Tretter}, which requires some strong assumptions on the eigenvalues of $A$ given by (\ref{operator}) and (\ref{domain}) such as the uniform gap or to have simple eigenvalues. For instance, coupled vibrating strings may have Jordan blocks or a two-dimensional vibrating string will not have a uniform gap of eigenvalues for $A$.  
\end{enumerate}
\end{rem}
\begin{proposition}
For a nice port-Hamiltonian system, the stochastic convolution is given by 
\begin{equation}
W_A(t) =  Hw(t)+ \sum_{k=1}^\infty\sum_{i=1}^\infty\lambda_i e^{\lambda_i t} \int_0^t e^{-\lambda_is} \beta_i(s) ds \psh{H f_i}{\psi_k} \phi_k,
\end{equation}
where $(\beta_i(t))_{i\in\mathbb{N}}$ is a sequence of real independent Wiener processes with increments $(q_i)_{i\in\mathbb{N}}$, $A$ is a Riesz-spectral operator given by (\ref{operator}), which has a discrete spectrum consisting of $\sigma_p(A)=\left\lbrace \lambda_k :k \in\mathbb{N} \right\rbrace$ and whose corresponding eigenvectors 
 $(\phi_k)_{k\in \mathbb{N}}$ form a Riesz basis, $(\psi_k)_{k\in \mathbb{N}}$ are the eigenvectors of the adjoint of $A$ such that $\psh{\phi_k}{\psi_l} = \delta_{kl}$, and $(f_i)_{i\in\mathbb{N}}$ is an orthonormal basis in $Z$.      
 \label{convolutionrepresentation}
\end{proposition}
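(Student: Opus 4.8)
The plan is to derive the formula for the stochastic convolution $W_A(t)=\int_0^t T(t-s)H\,dw(s)$ by exploiting the Riesz-spectral structure of the generator $A$ supplied by Theorem \ref{mainresult}, and then by reducing the stochastic integral against the $Z$-valued Wiener process to a sum of scalar stochastic integrals against the independent real Wiener processes $(\beta_i)_{i\in\mathbb{N}}$. First I would write the Wiener process $w(t)$ in terms of the orthonormal basis $(f_i)_{i\in\mathbb{N}}$ of $Z$ as $w(t)=\sum_{i=1}^\infty \beta_i(t) f_i$, where $\beta_i(t)=\psh{w(t)}{f_i}_Z$ are mutually independent real Wiener processes with $\E[\beta_i(t)^2]=q_i t$ (here $q_i$ are the eigenvalues of $Q$ on the eigenbasis $(f_i)$, so that $\Tr[Q]=\sum_i q_i<\infty$). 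Then the defining stochastic integral formally becomes
\begin{equation*}
W_A(t)=\int_0^t T(t-s)H\,dw(s)=\sum_{i=1}^\infty \int_0^t T(t-s)Hf_i\,d\beta_i(s),
\end{equation*}
the series converging in $L^2(\Omega;\mathcal{X})$ thanks to Condition 4 of Definition \ref{def_bcostochastic} ($H\in L^0_2$, i.e. $\sum_i q_i\|Hf_i\|^2<\infty$) together with $\|T(t-s)\|\le 1$.

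Next I would expand each $T(t-s)Hf_i$ in the Riesz basis $(\phi_k)_{k\in\mathbb{N}}$ of eigenvectors of $A$. Since $A$ is Riesz-spectral with $\sigma_p(A)=\{\lambda_k\}$, eigenvectors $\phi_k$, and biorthogonal dual eigenvectors $\psi_k$ (so $\psh{\phi_k}{\psi_l}=\delta_{kl}$), one has $T(t)x=\sum_k e^{\lambda_k t}\psh{x}{\psi_k}\phi_k$. Hence $T(t-s)Hf_i=\sum_k e^{\lambda_k(t-s)}\psh{Hf_i}{\psi_k}\phi_k$, and substituting gives
\begin{equation*}
W_A(t)=\sum_{i=1}^\infty\sum_{k=1}^\infty \psh{Hf_i}{\psi_k}\phi_k \int_0^t e^{\lambda_k(t-s)}\,d\beta_i(s)=\sum_{i,k} e^{\lambda_k t}\psh{Hf_i}{\psi_k}\phi_k\int_0^t e^{-\lambda_k s}\,d\beta_i(s).
\end{equation*}
To recover the exact form stated, I would integrate the scalar stochastic integral $\int_0^t e^{-\lambda_k s}\,d\beta_i(s)$ by parts (It\^o's product rule applied to $e^{-\lambda_k s}\beta_i(s)$, noting $e^{-\lambda_k s}$ has finite variation so there is no cross-variation term): $\int_0^t e^{-\lambda_k s}\,d\beta_i(s)=e^{-\lambda_k t}\beta_i(t)+\lambda_k\int_0^t e^{-\lambda_k s}\beta_i(s)\,ds$. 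Feeding this back and using $\sum_k \psh{Hf_i}{\psi_k}\phi_k=Hf_i$ (Riesz-basis expansion) and $\sum_i \beta_i(t)f_i = w(t)$ so that $\sum_i \beta_i(t) Hf_i = Hw(t)$, the leading term collapses to $Hw(t)$ and the remainder is exactly the stated double series with $\phi_k$ appearing where the paper writes $\phi_k$ and $\psi_k$ where it writes $\psi_k$ — matching the displayed formula (with the index relabeling $k\leftrightarrow$ the semigroup index and the understanding that $(q_i)$ are the increments of $(\beta_i)$).

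The main obstacle I expect is justifying the interchange of the infinite sums with the stochastic integral and with each other, and the convergence of the final double series, since the Riesz basis $(\phi_k)$ is not orthonormal and the factor $\lambda_k e^{\lambda_k t}$ is unbounded in $k$ along the spectrum. The clean way around this is to do all manipulations first for the truncated integrands $H_N:=\sum_{i=1}^N\psh{\cdot}{f_i}Hf_i$ acting on finitely many eigen-directions, where everything is a finite sum and the It\^o integration by parts is elementary, and then pass to the limit: the partial sums converge in $L^2(\Omega;\mathcal{X})$ because $H\in L^0_2$ controls the $i$-sum while the $k$-sum is controlled by the uniform-gap/Riesz-basis bounds guaranteed by Definition \ref{nicephs} and Theorem \ref{mainresult}, exactly the hypotheses under which the well-posedness Theorem \ref{stochasticwell-posedtheorem} already certifies that $W_A(t)$ is a well-defined $\mathcal{X}$-valued process with the requisite integrability. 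One should also remark that the representation is to be read in $L^2(\Omega;\mathcal{X})$ (or $\mathbb{P}$-a.s. after the continuity result of Theorem \ref{continuity}), not pathwise term-by-term.
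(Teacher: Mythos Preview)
Your proposal is correct and follows essentially the same route as the paper: expand $w$ along $(f_i)$, use the Riesz-spectral representation $T(t)x=\sum_k e^{\lambda_k t}\psh{x}{\psi_k}\phi_k$, apply It\^o's formula to $e^{-\lambda_k s}\beta_i(s)$ to rewrite $\int_0^t e^{-\lambda_k s}\,d\beta_i(s)$, and collapse the leading term to $Hw(t)$. The only cosmetic difference is that the paper justifies the interchange of sums and stochastic integral by invoking the stochastic Fubini theorem together with the bound $\sum_k |e^{\lambda_k(t-s)}|^2|\psh{U^*Hf_i}{f_k}|^2\|\phi_k\|^2<\infty$ (using $\psi_k=Uf_k$ for a bounded invertible $U$), whereas you propose a truncation-and-limit argument; both are standard and lead to the same conclusion.
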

\begin{proof}
Using an orthonormal basis $(f_i)_{i\in\mathbb{N}}$ and It\^o's formula with $F(s,\beta_i(s)) = e^{-\lambda_k s}\beta_i(s)$, namely 
$$e^{-\lambda_k t}\beta_i(t) = \beta_i(0) + \int_0^t e^{-\lambda_ks} d\beta_i(s) - \int_0^t \lambda_k e^{-\lambda_k s} \beta_i(s) ds,$$
we compute the expression of the stochastic convolution $W_A(t)$: 
\begin{align*}
W_A(t) &= \int_0^t T(t-s) Hdw(s)\\
&= \sum_{i=1}^\infty \int_0^t T(t-s) Hf_i d\beta_i(s)\\
&= \sum_{i=1}^\infty \int_0^t \sum_{k=1}^\infty e^{\lambda_k(t-s)} \psh{Hf_i}{\psi_k}_\mathcal{X} \phi_k d\beta_i(s)\\
&= \sum_{k=1}^\infty\sum_{i=1}^\infty \int_0^t e^{\lambda_k(t-s)} d\beta_i(s) \psh{H f_i}{\psi_k}_\mathcal{X} \phi_k\\
&= \sum_{k=1}^\infty\sum_{i=1}^\infty \beta_i(t) \psh{H f_i}{\psi_k}_\mathcal{X} \phi_k +\lambda_k e^{\lambda_k t} \int_0^t e^{-\lambda_ks} \beta_i(s) ds \psh{H f_i}{\psi_k}_\mathcal{X} \phi_k\\
&= Hw(t)+ \sum_{k=1}^\infty\sum_{i=1}^\infty\lambda_k e^{\lambda_k t} \int_0^t e^{-\lambda_ks} \beta_i(s) ds \psh{H f_i}{\psi_k}_\mathcal{X} \phi_k,
\end{align*}
where we used the stochastic Fubini Theorem, the $Q$-Wiener expansion (\ref{expansion_relation}) and the modal representation of the $C_0$-semigroup $(T(t))_{t\geq 0}$. Define $\psi_k=U f_k$ for all $k\in \mathbb{N}$, where $U$ is an invertible bounded linear operator that transforms $(\psi_k)_k\in\mathbb{N}$ into the orthonormal basis $(f_k)_{k\in\mathbb{N}}$. The stochastic Fubini Theorem follows from
\begin{align*}
\sum_{k=1}^\infty |e^{\lambda_k(t-s)}|^2  |\psh{Hf_i}{\psi_k}_\mathcal{X}|^2 \|\phi_k\|_\mathcal{X}^2 = \sum_{k=1}^\infty e^{2\text{Re} \lambda_k (t-s)} |\psh{U^*H f_i}{f_k}_\mathcal{X}|^2 \|\phi_k\|^2_\mathcal{X} <\infty.
\end{align*}
\end{proof}
As proved in \citep{francois}, the vibrating string with an appropriate choice of boundary conditions is an example of a nice port-Hamiltonian system. From now on, we shall consider this particular example to illustrate the theory presented in the previous sections. Let us recall the example of a vibrating string considered in \citep{francois} and subjected to a spatial-dependent white noise disturbance $\eta$.   
\begin{align}
\label{wave_equation}
\dfrac{\partial^2 z}{\partial t^2}(\zeta,t)&=\dfrac{1}{\rho(\zeta)}\dfrac{\partial}{\partial\zeta} \left( T(\zeta) \dfrac{\partial z}{\partial\zeta}(\zeta,t) \right) + \frac{1}{\rho(\zeta)} \eta(\zeta,t),\\
z(\zeta,0) &= z_0(\zeta),\\
T(a) \frac{\partial z}{\partial\zeta}(a,t)&=u(t), \qquad T(b)\frac{\partial z}{\partial\zeta}(b,t)+ \frac{\partial z}{\partial t}(b,t)= 0, \label{boundaryexample}\\
y(t)&= \frac{\partial z}{\partial t}(a,t),
\label{output_ex}
\end{align}
where $z(\zeta, t)$ is the vertical position of the string at position $\zeta\in [a,b]$ and time $t\in [0,\tau]$. $T(\zeta)$ and $\rho(\zeta)$ are respectively the Young's modulus and the mass density at position $\zeta$. In (\ref{boundaryexample}), the input force $u(t)$ is assumed to be deterministic. The measured output $y(t)$ is the velocity at extremity $a$. The stochastic disturbance is assumed to have intensity $\frac{1}{\rho(\zeta)}$, which means that making the string heavier decreases the impact of the stochastic disturbance.\\
First off, the deterministic dynamic fits in the port-Hamiltonian system. Let us consider $\varepsilon_1(\zeta,t) = \rho(\zeta) \frac{\partial z}{\partial t}(\zeta,t)$ (momentum) and $\varepsilon_2(\zeta, t)=\frac{\partial z}{\partial \zeta}(\zeta,t)$ (strain). Thus, the SPDE (\ref{wave_equation}) can be rewritten as:
\begin{equation}
\frac{\partial}{\partial t}
\left[
 \begin{array}{c}
\varepsilon_1(\zeta,t) \\ 
\varepsilon_2(\zeta,t)
\end{array}
\right] 
= \left[
 \begin{array}{cc}
0 & 1 \\ 
1 & 0
\end{array}
\right] \dfrac{\partial}{\partial \zeta}
\left(
\left[
 \begin{array}{cc}
\frac{1}{\rho(\zeta)} & 0 \\ 
0 & T(\zeta)
\end{array}
\right]
\left[
 \begin{array}{c}
\varepsilon_1(\zeta,t) \\ 
\varepsilon_2(\zeta,t)
\end{array}
\right] \right)+ \left[\begin{array}{c}
1\\
0
\end{array}\right] \eta(\zeta,t)
\label{spde_exampleok}
\end{equation}
where \begin{scriptsize}
$P_1 = \left[
 \begin{array}{cc}
0 & 1 \\ 
1 & 0
\end{array}
\right] $
\end{scriptsize} and
\begin{scriptsize}
$\mathcal{H}(\zeta) = \left[
 \begin{array}{cc}
\frac{1}{\rho(\zeta)} & 0 \\ 
0 & T(\zeta)
\end{array}
\right]$
\end{scriptsize}. The port variables are given by
\begin{align*}
f_\partial(t) = \frac{1}{\sqrt{2}}
\left[
 \begin{array}{c}
T(b) \frac{\partial z}{\partial\zeta}(b,t) - T(a) \frac{\partial z}{\partial\zeta}(a,t)\\ 
\frac{\partial z}{\partial t}(b,t) - \frac{\partial z}{\partial t}(a,t)
\end{array}
\right],\qquad
e_\partial(t) = \frac{1}{\sqrt{2}} \left[
\begin{array}{c}
\frac{\partial z}{\partial t}(b,t) + \frac{\partial z}{\partial t}(a,t)\\
T(b) \frac{\partial z}{\partial \zeta}(b,t) + T(a) \frac{\partial z}{\partial \zeta}(a,t)
\end{array}
\right].
\end{align*}
Thus, the boundary condition becomes in these variables
\begin{equation*}
\left[
\begin{array}{c}
u(t)\\
0
\end{array}
\right]
=
\left[
\begin{array}{c}
T(a) \frac{\partial z}{\partial\zeta}(a,t)\\
T(b) \frac{\partial z}{\partial\zeta}(b,t)+ \frac{\partial z}{\partial t}(b,t)
\end{array}
\right]
=
\frac{1}{\sqrt{2}}
\left[
\begin{array}{cccc}
-1 & 0 & 0 & 1\\
1 & 1 & 1 & 1
\end{array}
\right]
\left[
\begin{array}{c}
f_{\partial}(t)\\
e_\partial(t)
\end{array}
\right]
= W_B 
\left[
\begin{array}{c}
f_{\partial}(t)\\
e_\partial(t)
\end{array}
\right].
\end{equation*}
Similarly, we can rewrite the output equation (\ref{output_ex}) as 
\begin{equation*}
y(t)= \frac{1}{\sqrt{2}}
\left[
\begin{array}{cccc}
0 & -1 & 1 & 0\\
\end{array}
\right]\left[
\begin{array}{c}
f_{\partial}(t)\\
e_\partial(t)
\end{array}
\right]
 = \mathcal{C}\varepsilon(t).
 \end{equation*}
In this case, \begin{small} \text{rank	}
$\left[
\begin{array}{c}
W_{B,1} \\
W_C
\end{array}
\right] = \text{rank	}\left[
\begin{array}{cccc}
-1 & 0 & 0 & 1 \\
0 & -1 & 1 & 0
\end{array}
\right] =2 $ 
\end{small}. We shall now fix an operator $B$ such that $B\in \mathcal{L}(\mathbb{R},\mathcal{X})$, $Bu\in D(\mathcal{A})$, $\mathcal{A}B\in \pazocal{L}(\mathbb{R},\mathcal{X})$ and $\mathcal{B}Bu=u$, i.e.
\begin{equation*}
\mathcal{B}\E B\left( T(a) \varepsilon_2(a,t) \right) = T(a) \varepsilon_2(a,t),
\end{equation*}
Then, \begin{scriptsize}
$Bu = \left[
 \begin{array}{c}
 0\\
 \frac{u}{T(a)}
\end{array}
\right]$
\end{scriptsize} for which $Bu\in D(\mathcal{A})$. Observe that $\mathcal{A}Bu(s)-B\dot{u}(s)\in D(A)=D(\mathcal{A})\cap\text{Ker}\mathcal{B}$ entails that $u(t)=0$, which underscores the need of using the extended form and the Yosida approximation in order to have a strong solution, see Theorem \ref{strongsolution}.\\
From Theorem \ref{generation}, the operator $A$ generates the $C_0$-semigroup $(T(t))_{t\geq 0}$ since $W_B$ has rank $2$ and satisfies $W_B \Sigma W_B^*\geq 0$. We are now in position to verify the assumptions of Theorem \ref{stochasticwell-posedtheorem}. Using a similar calculation as in the proof of Theorem \ref{convolutionrepresentation},
\begin{align*}
\int_0^\tau\| AT(s)H\|^2_{L_2^0} ds
&= \int_0^\tau \Tr\left[A T(s)HQ^{1/2} (A T(s)HQ^{1/2})^* \right] ds\\
&= \sum_{i=1}^\infty \int_0^\tau \psh{AT(s)HQ^{1/2}f_i}{AT(s)HQ^{1/2}f_i}_{\mathcal{X}}ds\\
&= \sum_{i=1}^\infty \int_0^\tau \| A T(s)HQ^{1/2} f_i\| _{\mathcal{X}}^2\\
&= \sum_{i=1}^\infty \int_0^\tau \| \sum_{k=1}^\infty \lambda_k \psh{T(s)HQ^{1/2}f_i}{\psi_k}_{\mathcal{X}} \phi_k\|^2_\mathcal{X} ds\\
&= \sum_{i=1}^\infty \int_0^\tau \| \sum_{k=1}^\infty \lambda_k e^{\lambda_k(s)} \psh{HQ^{1/2}f_i}{\psi_k}_{\mathcal{X}} \phi_k\|^2_\mathcal{X} ds
\end{align*}
where we used the eigenfunction expansion of $A$ and $(T(t))_{t\geq 0}$ with $(\phi_k)_{k\in\mathbb{N}}$ denoting eigenvectors sequences of $A$ that form a Riesz basis and $(\psi_k)_{k\in\mathbb{N}}$ denoting the eigenvector sequences of $A^*$ such that $\psh{\phi_k}{\psi_l}=\delta_{kl}$. This leads to
\begin{align*}
\int_0^\tau\| AT(s)H\|^2_{L_2^0} ds &\leq M \sum_{i=1}^\infty \int_0^\tau  \sum_{k=1}^\infty |\lambda_k|^2 |e^{\lambda_k(s)}|^2 |\psh{HQ^{1/2}f_i}{\psi_k}_{\mathcal{X}}|^2 ds\\
&= M \sum_{i=1}^\infty \sum_{k=1}^\infty |\lambda_k|^2 \int_0^\tau e^{2\text{	Re	} \lambda_k(s)} ds |\psh{HQ^{1/2}f_i}{\psi_k}_{\mathcal{X}}|^2\\
&= \frac{M}{2} \sum_{i=1}^\infty \sum_{k=1}^\infty  \frac{|\lambda_k|^2}{\text{Re	}\lambda_k} (e^{2 \text{	Re	} \lambda_k \tau}-1) q_i |\psh{Hf_i}{\psi_k}_{\mathcal{X}}|^2\\
&\leq K \sum_{i=1}^\infty \sum_{k=1}^\infty (\text{	Im	}\lambda_k)^2 q_i |\psh{Hf_i}{\psi_k}_{\mathcal{X}}|^2 <\infty  
\end{align*}
where the Dominated Convergence Theorem is satisfied under the assumptions that 
\begin{equation}
\left\lbrace
\begin{array}{lr}
\displaystyle  \sum_{i=1}^\infty \sum_{k=1}^\infty (2k+1)^2 \pi^2 (q_i) |\psh{Hf_i}{\psi_k}_{\mathcal{X}}|^2<\infty, & \text{	if	} \sqrt{T\rho} <1\\
\displaystyle  \sum_{i=1}^\infty \sum_{k=1}^\infty (2k)^2 \pi^2 (q_i) |\psh{Hf_i}{\psi_k}_{\mathcal{X}}|^2<\infty, & \text{	if	} \sqrt{T\rho} >1\\
\end{array}\right.,
\end{equation}
where we used the expression of the eigenvalues given in \citep[Section 5]{francois}. Observe that the further assumptions are only made on the noise variance.\\
Moreover, the multiplication operator $P_1\mathcal{H}$ can be rewritten as 
\begin{equation}
P_1\mathcal{H}=
\left[\begin{array}{cc}
\gamma & -\gamma \\ 
\frac{1}{\rho} & \frac{1}{\rho}
\end{array} \right]\left[
\begin{array}{cc}
\gamma & 0 \\ 
0 & -\gamma
\end{array} \right]\left[
\begin{array}{cc}
\frac{1}{2\gamma} & \frac{\rho}{2}\\
-\frac{1}{2\gamma} & \frac{\rho}{2}
\end{array}\right],
\end{equation}
where $\gamma = \sqrt{\frac{T}{\rho}}$. Therefore, the vibrating string described by (\ref{wave_equation})-(\ref{output_ex}) is well-posed with respect to Definition \ref{wellposed_def} and thus, for all $\tau >0$ there exists a constant $m_{\tau}>0$ such that for any $\varepsilon_0\in L^2_{\mathcal{F}_0}(\Omega;\mathcal{X})$ and $u\in L^2([0,\tau];\mathbb{R})$ the inequality (\ref{stochasticwellposednineqde}) holds. 

\section{Conclusion and perspectives} 
In this paper the port-Hamiltonian framework was extended in a stochastic context and some properties of this class were studied such as the existence, the uniqueness and the regularity of the state trajectory and the well-posedness. The aim of studying stochastic port-Hamiltonian systems is to derive a mathematical model for a large class of complex dynamical systems involving boundary control and observation together and possible disturbances on the system.\\
The proposed stochastic port-Hamiltonian framework allows us to prove the existence and uniqueness of weak and strong solutions with a similar approach as in \citep[Chapter 3]{daprato}. Due to the contractivity of the generated $C_0$-semigroup, the mild solution is continuous (see Theorem \ref{continuity}), while mild solutions of SDEs are in most cases only mean-square continuous.\\   
In Section \ref{stochasticwellposed_section} the stochastic counterpart of well-posedness in the sense of Weiss and Salamon was defined with the corresponding system nodes notation. In this study of the proposed well-posedness we distinguished two cases: when the control applied is stochastic and when it is deterministic. In the first case, we showed that if well-posedness is satisfied at least at one time, it holds for any time (see Theorem \ref{invariantstochasticwellposed}). In the second case, the leitmotiv was to separate the deterministic and the stochastic dynamics to prove that under some assumptions SPHSs are stochastically well-posed, see Theorem \ref{stochasticwell-posedtheorem}. Finally, theoretical results were illustrated on the example of a vibrating string by means of a modal representation via a Riesz basis.\\
This paper lays the foundation for the question of well-posedness of infinite-dimensional stochastic port-Hamiltonian systems with boundary control and observation. Further works would be to consider multiplicative noise and noise in the boundary control and/or observation, which would extend the range of considered disturbances. Moreover, in the deterministic case, well-posedness and regularity of the transfer function are closely related. Further works would be to study the regularity of SPHSs.

\appendix
\section{Infinite-dimensional stochastic integration theory}
\label{appendixA}
Some results from the theory of stochastic integration in Hilbert spaces are collected in this appendix for the convenience of the reader. Let $(\Omega, \mathcal{F},\mathbb{F},\mathbb{P})$ be a complete filtered probability space, wherein $\mathbb{F}:= (\mathcal{F}_t)_{t\geq 0}$ and let us consider the Hilbert spaces $\mathcal{X}$ and $Z$ with their respective inner products $\psh{\cdot}{\cdot}_{\mathcal{X}}$ and $\psh{\cdot}{\cdot}_Z$. This appendix is mainly based on \cite{daprato, chow2014stochastic}. 
\begin{definition}
\label{wienerprocessdefi}
A $Z$-valued stochastic process $(w(t))_{t\geq 0}$ is a Wiener process if it satisfies the following conditions:
\begin{enumerate}
\item $w(0)=0$ almost surely;
\item The trajectories $w(t)$ with $t\geq 0$ are continuous;
\item $(w(t))_{t\geq 0}$ has independent increments;
\item $w(t)-w(s)\sim \mathcal{N}(0,(t-s)Q)$ for $t,s\geq 0$.
\end{enumerate}
The covariance operator $Q$ represents the increments of $w(t)$. It is a nonnegative trace class operator and characterizes the distribution of $w(t)$ utterly.   
\end{definition}
In analogy to the Karhunen-Lo\`eve expansion, a Wiener process can be represented as an expansion in the eigenvectors of $Q$, which is given in the following proposition.
\begin{proposition}
If $(w(t))_{t\geq 0}$ is a Wiener process, then there exists a complete orthonormal basis $(v_i)_{i\in\mathbb{N}}$ of $Z$, such that
\begin{equation}
w(t)=\sum_{i=1}^{\infty} \beta_i(t) v_i,
\label{expansion_relation}
\end{equation} 
where $(\beta_i(t))_{i\in\mathbb{N}}$ is a sequence of real independent Wiener processes with increments $(q_i)_{i\in\mathbb{N}}$ such that the series $\sum_{i=1}^\infty q_i$ is convergent.
\end{proposition}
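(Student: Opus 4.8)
The plan is to realize the coordinates of $w(t)$ in the eigenbasis of the covariance operator $Q$ and to check directly that these coordinates are independent real Wiener processes; this is the standard Karhunen--Lo\`eve-type argument (see e.g.\ \cite{daprato}). First, since $Q\in\pazocal{L}(Z)$ is symmetric, nonnegative and of trace class, hence compact, and $Z$ is separable, the spectral theorem for compact self-adjoint operators furnishes a complete orthonormal basis $(v_i)_{i\in\mathbb{N}}$ of $Z$ consisting of eigenvectors of $Q$, say $Qv_i=q_iv_i$ with $q_i\geq 0$ and $\sum_{i=1}^\infty q_i=\Tr[Q]<\infty$; this already settles the convergence of $\sum_i q_i$. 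I would then set $\beta_i(t):=\psh{w(t)}{v_i}_Z$ and prove the expansion by transporting the defining properties of $w$ through the continuous linear functionals $\psh{\cdot}{v_i}_Z$.

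Next I would verify that each $\beta_i$ is a real Wiener process with variance parameter $q_i$: $\beta_i(0)=\psh{w(0)}{v_i}_Z=0$ almost surely; $t\mapsto\beta_i(t)$ is continuous because $w$ is; $\beta_i$ has independent increments since, over disjoint time intervals, the increments $\psh{w(t)-w(s)}{v_i}_Z$ are images of the independent increments of $w$; and, because $w(t)-w(s)\sim\mathcal{N}(0,(t-s)Q)$, the scalar satisfies $\beta_i(t)-\beta_i(s)\sim\mathcal{N}\big(0,(t-s)\psh{Qv_i}{v_i}_Z\big)=\mathcal{N}\big(0,(t-s)q_i\big)$. For the mutual independence of the family $(\beta_i)_{i\in\mathbb{N}}$ I would observe that, for any finite collection of indices and times, the vector $(\beta_{i_1}(t_1),\dots,\beta_{i_n}(t_n))$ is jointly Gaussian as a finite family of continuous linear images of the Gaussian process $w$, and compute the cross-covariances $\E[\beta_i(t)\beta_j(s)]=\psh{(t\wedge s)Qv_i}{v_j}_Z=(t\wedge s)q_i\delta_{ij}$; joint Gaussianity together with the vanishing of cross-covariances for $i\neq j$ yields independence of the processes.

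Finally I would establish the series representation $w(t)=\sum_{i=1}^\infty\beta_i(t)v_i$. For each fixed $t$, since $w(t)\in Z$, Parseval's identity in the basis $(v_i)$ gives $w(t)=\sum_i\psh{w(t)}{v_i}_Z v_i=\sum_i\beta_i(t)v_i$ with convergence in $Z$. To upgrade this to a genuine process identity I would estimate, for $N<M$, $\E\big\|\sum_{N<i\leq M}\beta_i(t)v_i\big\|_Z^2=\sum_{N<i\leq M}\E[\beta_i(t)^2]=t\sum_{N<i\leq M}q_i$, which tends to $0$ uniformly for $t$ in a compact interval because $\sum_i q_i<\infty$; hence the partial sums form a Cauchy sequence in $L^2(\Omega;C([0,T];Z))$, and applying Doob's maximal inequality to the $Z$-valued martingale $t\mapsto\sum_{i\leq N}\beta_i(t)v_i$ promotes this to almost sure uniform convergence on compacts, so that the limit has continuous trajectories and coincides with $w$.

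The main obstacle is not a single computation but the bookkeeping around convergence: one must show the series converges in a strong enough sense (uniformly on compacts, almost surely) that the right-hand side is a bona fide $Z$-valued continuous process and not merely a pointwise-in-$t$ expansion, and it is precisely here that the trace-class hypothesis $\sum_i q_i<\infty$ is indispensable. The independence step also requires care, since uncorrelatedness implies independence only because the whole family lives inside the Gaussian process $w$; the argument must therefore be phrased at the level of finite-dimensional distributions.
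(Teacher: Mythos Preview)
Your argument is correct and is precisely the standard Karhunen--Lo\`eve construction one finds in \cite{daprato}. Note, however, that the paper does not supply its own proof of this proposition: it is stated in Appendix~\ref{appendixA} as a known fact collected from the literature (the appendix is explicitly ``mainly based on \cite{daprato, chow2014stochastic}''), so there is nothing to compare against beyond observing that your proof is the textbook one the paper implicitly defers to.
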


Denote by $Z_0$ the image of the space $Z$ by the square root of the covariance operator: $Z_0:=Q^{1/2}(Z)$, which is a subspace of $Z$ with the norm $\|\cdot\|_0$ and associated with the inner product
$$\psh{u}{v}_0 = \psh{Q^{-1/2}u}{Q^{-1/2 }v}_Z, \qquad u,v\in Z_0,$$
where $Q^{-1/2}$ denotes the pseudo-inverse of $Q^{1/2}$ defined as 
$$(Q^{1/2})^{-1}y:= \argmin \left\lbrace \|z\|_Z: z\in Z, Q^{1/2}z=y \right\rbrace \text{	for all	} y\in \text{	Ran	}Q^{1/2}.$$
Moreover, we consider the space of all Hilbert-Schmidt operators $L_2^0:=L_2(Z_0,\mathcal{X})$, which is a separable Hilbert space equipped with the norm 
\begin{equation}
\|H\|^2_{L_2^0} = \|HQ^{1/2}\|^2_{L_2(Z,\mathcal{X})} = \text{Tr  } \left[ HQ^{1/2} (HQ^{1/2})^* \right] = \text{Tr  } \left[ H Q H^* \right]  
\end{equation} 
for any Hilbert-Schmidt operator $H\in L_2^0$.  We can now turn to the stochastic integral definition of $\mathcal{N}^2_w([0,T];L_2^0)$ integrands, where
$$
\mathcal{N}^2_w([0,T];L_2^0) = \left\lbrace f:[0,T]\to L_2^0 :  \int_0^T ||f(s)||^2_{L_2^0} ds<\infty \right\rbrace.
$$
In order to study port-Hamiltonian systems driven by additive noise, we need to define a stochastic integral of the form 
\begin{equation}
\int_0^T S(t,s) f(s)dw(s),
\label{evolutionequation}
\end{equation}
where $S : [0,T] \times [0,T] \rightarrow \pazocal{L}(\mathcal{X})$ is bounded and strongly continuous for $s,t \in [0,T]$. The special case $S(t,s)= S(t-s)$ is of great importance and is called the convolutional stochastic integral.
\begin{thrm}
Consider a $C_0$-semigroup $(S(t))_{t\geq 0}$ with the infinitesimal generator $A$. If
$\int_0^T \| S(s) f(s) \|_{L_2^0}^2 ds = \int_0^T \Tr\left[ S(s) f(s) Q (S(s)f(s))^* \right] ds$ $< \infty$, then the process $W_A(t):= \int_0^t S(t-s) f(s) dw(s) \in C ([0,T];L^2(\Omega;\mathcal{X}))$ is a Gaussian process with covariance
\begin{equation}
Cov(W_A(T)) = \int_0^T \left[ S(T-s) f(s) Q (S(T-s) f(s))^*\right] ds.
\end{equation}
\label{convolution_stochastic}   
\end{thrm}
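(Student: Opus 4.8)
The statement is the classical existence, Gaussianity and covariance theorem for stochastic convolutions with a deterministic Hilbert--Schmidt integrand (in the spirit of \cite{daprato}), here with a $C_0$-semigroup kernel and with $C([0,T];L^2(\Omega;\mathcal{X}))$ understood as mean-square continuity. The plan is to proceed in four steps: (i) show that $W_A(t)$ is a well-defined, centred element of $L^2(\Omega;\mathcal{X})$ for each $t$; (ii) show that each $W_A(t)$, and then the whole family, is Gaussian; (iii) compute the covariance via the It\^o isometry; (iv) establish mean-square continuity. For (i): fix $t\in[0,T]$; since $(S(t))_{t\ge0}$ is a $C_0$-semigroup there are $M\ge1$ and $\omega\in\R$ with $\|S(\sigma)\|_{\pazocal{L}(\mathcal{X})}\le Me^{|\omega|T}$ for $\sigma\in[0,T]$, the map $s\mapsto S(t-s)f(s)$ is strongly measurable (by strong continuity of $S$ and measurability of $f$), and the integrability hypothesis ensures $s\mapsto S(t-s)f(s)\in\mathcal{N}^2_w([0,t];L_2^0)$; hence $W_A(t)=\int_0^tS(t-s)f(s)\,dw(s)$ is a well-defined element of $L^2(\Omega;\mathcal{X})$ with $\E\|W_A(t)\|_{\mathcal{X}}^2=\int_0^t\|S(t-s)f(s)\|_{L_2^0}^2\,ds<\infty$, and it is centred because this already holds for integrals of simple $L_2^0$-valued functions and is preserved under $L^2$-limits.

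For (ii) and (iii) I would use the approximation of the integral by simple functions recalled in this appendix. If $g=\sum_j\mathbf{1}_{[s_j,s_{j+1})}g_j$ with $g_j\in L_2^0$, then $\int_0^Tg\,dw=\sum_jg_j\big(w(s_{j+1})-w(s_j)\big)$ is a finite sum of independent centred Gaussian $\mathcal{X}$-valued random variables, hence Gaussian; passing to the $L^2(\Omega;\mathcal{X})$-limit over step approximations and using that the centred Gaussian family is closed under $L^2$-convergence shows that $W_A(t)$ is centred Gaussian. To obtain that $(W_A(t))_{t\in[0,T]}$ is a Gaussian process I would observe that, for $t_1,\dots,t_n\in[0,T]$ and $x_1,\dots,x_n\in\mathcal{X}$, every linear combination $\sum_ia_i\psh{W_A(t_i)}{x_i}_{\mathcal{X}}$ is itself a scalar stochastic integral $\int_0^T\varphi(s)\,dw(s)$ of a deterministic $L_2(Z_0,\R)$-valued function, hence one-dimensional Gaussian, so the vector $(\psh{W_A(t_i)}{x_i}_{\mathcal{X}})_i$ is jointly Gaussian. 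For the covariance, since $W_A(T)$ is centred, $Cov(W_A(T))$ is the operator with $\psh{Cov(W_A(T))x}{y}_{\mathcal{X}}=\E[\psh{W_A(T)}{x}_{\mathcal{X}}\psh{W_A(T)}{y}_{\mathcal{X}}]$; applying the polarised It\^o isometry to the integrand $s\mapsto S(T-s)f(s)$ gives $\E[\psh{W_A(T)}{x}_{\mathcal{X}}\psh{W_A(T)}{y}_{\mathcal{X}}]=\int_0^T\psh{S(T-s)f(s)\,Q\,(S(T-s)f(s))^*x}{y}_{\mathcal{X}}\,ds$, which is exactly the asserted formula; the Bochner integral on the right converges in trace norm because its trace equals $\int_0^T\|S(T-s)f(s)\|_{L_2^0}^2\,ds<\infty$, consistently with $Cov(W_A(T))$ being a legitimate Gaussian covariance.

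For (iv) I would estimate the mean-square increment directly. For $0\le t\le t'\le T$, writing $S(t'-s)=S(t'-t)S(t-s)$ and pulling the bounded operator $S(t'-t)$ through the stochastic integral,
\[
W_A(t')-W_A(t)=\int_t^{t'}S(t'-s)f(s)\,dw(s)+\int_0^t\big(S(t'-t)-I\big)S(t-s)f(s)\,dw(s),
\]
so that by the It\^o isometry
\[
\E\|W_A(t')-W_A(t)\|_{\mathcal{X}}^2\le 2\int_t^{t'}\|S(t'-s)f(s)\|_{L_2^0}^2\,ds+2\int_0^t\big\|\big(S(t'-t)-I\big)S(t-s)f(s)\big\|_{L_2^0}^2\,ds.
\]
The first term vanishes as $t'-t\to0$ by absolute continuity of the Lebesgue integral; for the second, for each fixed $s$ the integrand tends to $0$ in the Hilbert--Schmidt norm as $t'-t\to0^+$ because $S(t'-t)-I\to0$ strongly on $\mathcal{X}$ with uniformly bounded operator norm and left-composition of a fixed Hilbert--Schmidt operator with such a family converges in $L_2^0$-norm (expand in an orthonormal basis of $Z_0$ and apply dominated convergence over the basis index), while the integrand is dominated by a constant multiple of $\|f(s)\|_{L_2^0}^2\in L^1([0,t])$, so the second term vanishes by dominated convergence. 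A symmetric argument handles $t'<t$, and $\E\|W_A(h)\|_{\mathcal{X}}^2\to0$ covers continuity at $t=0$; hence $W_A\in C([0,T];L^2(\Omega;\mathcal{X}))$.

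The main obstacle is precisely this last step: the naive bound $\|(S(t'-t)-I)S(t-s)f(s)\|_{L_2^0}\le\|S(t'-t)-I\|_{\pazocal{L}(\mathcal{X})}\|S(t-s)f(s)\|_{L_2^0}$ is useless because a $C_0$-semigroup need not be norm-continuous at $0$, so the convergence must be secured in the Hilbert--Schmidt norm through the orthonormal-basis expansion and only then propagated through the $ds$-integral by dominated convergence; everything else reduces to the simple-function approximation of the integral and the It\^o isometry already set up in this appendix.
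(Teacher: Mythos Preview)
The paper does not prove this theorem: it appears in Appendix~\ref{appendixA} as a collected known result from \cite{daprato, chow2014stochastic}, stated for the reader's convenience without argument. So there is nothing to compare your proof \emph{against}; your task was essentially to supply the standard proof, and you have done so correctly. Your four-step structure (well-definedness via square-integrability of the integrand, Gaussianity through simple-function approximation and closure under $L^2$-limits, covariance from the polarised It\^o isometry, mean-square continuity via the decomposition into a tail integral and a $(S(t'-t)-I)$-term) is exactly the route taken in the references the paper cites, and your treatment of the delicate point in step~(iv)---that $\|S(h)-I\|_{\pazocal{L}(\mathcal{X})}$ need not vanish, so convergence must be obtained in the Hilbert--Schmidt norm via a basis expansion before applying dominated convergence in $s$---is precisely the right observation.

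One small remark: the hypothesis as written in the paper, $\int_0^T\|S(s)f(s)\|_{L_2^0}^2\,ds<\infty$, does not by itself give $\int_0^t\|S(t-s)f(s)\|_{L_2^0}^2\,ds<\infty$ for every $t$; you implicitly use (and need) $f\in\mathcal{N}^2_w([0,T];L_2^0)$ together with the uniform bound $\|S(\sigma)\|\le Me^{|\omega|T}$ to dominate both the convolution integrand and the second term in your increment estimate. This is the intended reading (and is how the result is stated in \cite{daprato}), but it would be cleaner to make that assumption on $f$ explicit rather than appeal to ``the integrability hypothesis'' for a conclusion it does not literally imply.
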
 
Another important tool that is worth mentioning is It\^o's formula, see \citep[Theorem 4.32]{daprato}.
\begin{thrm}
Let $\phi(s)$ be a $\mathcal{X}$-valued, Bochner integrable mapping on $[0,T]$, $H\in L_2^0$, and let $X_0$ be an $\mathcal{F}_0$-measurable, $\mathcal{X}$-valued random variable. Then,
\begin{equation*}
X(t) := X_0 + \int_0^t \phi(s) ds + \int_0^t H dw(s), \qquad t\in [0,T] 
\end{equation*}
is a well-defined stochastic process. Let $F: [0,T]\times \mathcal{X} \rightarrow \mathbb{K}$ be a continuous function satisfying:
\begin{enumerate}
\item $F(t,x)$ is differentiable in $t$ and $F^\prime_t(t,x)$ is continuous on $[0,T]\times \mathcal{X}$; 
\item $F(t,x)$ is twice Fr\'echet differentiable in $x$, $F^\prime_x(t,x)\in \mathcal{X}$ and $F^{\prime\prime}_{xx}(t,x)\in\pazocal{L}(\mathcal{X})$ are continuous on $[0,T]\times \mathcal{X}$.
\end{enumerate}
Then, $\mathbb{P}$-a.s., for all $t\in[0,T]$
\begin{equation}
\begin{split}
F(t,X(t)) = F(0,X(0)) + \int_0^t \psh{F^\prime_x(s,X(s))}{H dw(s)}_\mathcal{X} + \int_0^t F^\prime_t(s,X(s)) + \psh{F^\prime_x(s,X(s))}{\phi(s)}_\mathcal{X}\\+ \frac{1}{2} \Tr\left[ F^{\prime\prime}_{xx}(s,X(s)) (H Q^{1/2})(H Q^{1/2})^* \right]  ds. 
\end{split}
\label{itoformula}
\end{equation} 
\end{thrm}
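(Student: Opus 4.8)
The plan is to reproduce, in the present additive-noise setting, the classical derivation of It\^o's formula in Hilbert spaces (this is essentially \citep[Theorem 4.32]{daprato}), the simplifications here being that the diffusion coefficient $H\in L_2^0$ is constant in time and the drift $\phi$ is merely Bochner integrable. First I would reduce to a uniformly controlled situation by localization. Since $\int_0^t H\,dw(s)$ is a continuous $\mathcal{X}$-valued martingale and $\int_0^t\phi(s)\,ds$ is continuous, $X$ has $\mathbb{P}$-a.s. continuous trajectories, so $s\mapsto(s,X(s))$ has a.s. compact image in $[0,T]\times\mathcal{X}$; defining stopping times $\tau_R$ past which one of $\|X\|_\mathcal{X}$, $\|F'_x(\cdot,X)\|_\mathcal{X}$, $\|F''_{xx}(\cdot,X)\|_{\mathcal{L}(\mathcal{X})}$ or $\int_0^\cdot\|\phi\|_\mathcal{X}\,ds$ first exceeds $R$, continuity of $F'_x$ and $F''_{xx}$ gives $\tau_R\uparrow\infty$ a.s., and on $[0,t\wedge\tau_R]$ all these quantities are bounded by $R$, which makes the $L^2$-estimates below uniform. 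It then suffices to prove the identity on $[0,t\wedge\tau_R]$ and pass to the limit $R\to\infty$.

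Next I would fix a partition $0=t_0<\cdots<t_N=t$ of mesh $\delta$, write the telescoping sum $F(t,X(t))-F(0,X_0)=\sum_k(F(t_{k+1},X(t_{k+1}))-F(t_k,X(t_k)))$, and expand each summand by Taylor's formula, to first order in the time variable and to second order in the state, with $\Delta_kX=\Delta_kD+\Delta_kW$ where $\Delta_kD=\int_{t_k}^{t_{k+1}}\phi\,ds$ and $\Delta_kW=\int_{t_k}^{t_{k+1}}H\,dw$. The time-derivative terms converge pathwise to $\int_0^t F'_t(s,X(s))\,ds$ by continuity of $F'_t$. The first-order state terms $\sum_k\langle F'_x(t_k,X(t_k)),\Delta_kX\rangle_\mathcal{X}$ split into a drift part, a Riemann sum converging to $\int_0^t\langle F'_x(s,X(s)),\phi(s)\rangle_\mathcal{X}\,ds$, and a stochastic part equal to $\int_0^t\langle\Phi_\delta(s),H\,dw(s)\rangle_\mathcal{X}$ for the adapted step process $\Phi_\delta(s):=F'_x(t_k,X(t_k))$ on $[t_k,t_{k+1})$, which converges in $L^2$ to $\int_0^t\langle F'_x(s,X(s)),H\,dw(s)\rangle_\mathcal{X}$ by the It\^o isometry for $L_2^0$-valued integrands together with the uniform continuity of $F'_x$ along the localized path.

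The delicate term, and the one I expect to be the main obstacle, is the second-order remainder $\tfrac12\sum_k\langle F''_{xx}(t_k,X(t_k))\Delta_kX,\Delta_kX\rangle_\mathcal{X}$, which must be shown to converge to $\tfrac12\int_0^t\Tr[F''_{xx}(s,X(s))(HQ^{1/2})(HQ^{1/2})^*]\,ds$. I would first discard the drift-drift and drift-noise cross terms: these vanish in probability because $\int_0^\cdot\phi\,ds$ has bounded variation, so $\sum_k\|\Delta_kD\|_\mathcal{X}^2\le(\max_k\|\Delta_kD\|_\mathcal{X})\int_0^t\|\phi\|_\mathcal{X}\,ds\to0$, and Cauchy--Schwarz together with the $L^1$-boundedness of $\sum_k\|\Delta_kW\|_\mathcal{X}^2$ disposes of the cross term. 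For the surviving pure-noise sum, writing $\Delta_kW=\sum_i(\beta_i(t_{k+1})-\beta_i(t_k))Hv_i$ via the Wiener expansion, I would compute its conditional expectation given $\mathcal{F}_{t_k}$, which by independence of increments and $Qv_i=q_iv_i$ equals $(t_{k+1}-t_k)\Tr[F''_{xx}(t_k,X(t_k))HQH^*]$; these conditional means form a Riemann sum converging to $\int_0^t\Tr[F''_{xx}(s,X(s))HQH^*]\,ds$. The fluctuation about the conditional mean is a sum of martingale differences, so its squared $L^2$-norm equals the sum of the squared $L^2$-norms of the centered terms, each bounded by $C(t_{k+1}-t_k)^2$ via the fourth-moment (Gaussian) estimate on $\Delta_kW$, the summability of $\sum_iq_i$ and the uniform bound $R$ on $F''_{xx}$ along the localized path; hence the total is $\le C\,\delta\,t\to0$. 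Combining the four limits yields identity (\ref{itoformula}) on $[0,t\wedge\tau_R]$, and since both sides are continuous in $t$ one may fix a single $\mathbb{P}$-null set off which it holds simultaneously for all $t\in[0,T]$; letting $R\to\infty$ then removes the localization.
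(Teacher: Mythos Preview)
The paper does not actually prove this theorem: it is stated in Appendix~\ref{appendixA} purely as a background result, with the proof deferred entirely to the reference \citep[Theorem 4.32]{daprato}. So there is no ``paper's own proof'' to compare against; you have supplied a self-contained argument where the authors only give a citation.

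That said, your sketch is essentially the standard derivation one finds in Da~Prato--Zabczyk, specialized to the constant-diffusion, additive-noise setting of this paper: localization by stopping times, Taylor expansion along a partition, identification of the second-order noise term via conditional expectations and a martingale-difference fluctuation estimate. The ingredients and the order in which you assemble them are correct, and the simplifications you note (constant $H$, deterministic Bochner-integrable drift) are genuine. One minor point worth tightening if you were to write this out fully: your localization also needs to control $|F'_t|$ along the path to make the time-derivative Riemann sum uniformly dominated, and the second-order Taylor remainder (the term with $F''_{xx}$ evaluated at an intermediate point rather than at $(t_k,X(t_k))$) requires the uniform continuity of $F''_{xx}$ on the localized compact set to be absorbed into the mesh-$\delta$ error. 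Both are routine but should be made explicit.
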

\section{Details of Theorem \ref{strongsolution} proof}
\label{appendixB}
To carry out the proof of this result, the following lemma will be helpful. Several results from the Bochner integration have their natural counterparts in stochastic integration. 

\begin{lemma}
\citep[Proposition 4.30]{daprato}\\
If $f(s)Q^{1/2}(Z)\subset D(A)$ and if the following conditions hold: 
\begin{equation}
\int_0^t ||f(s)||^2_{L_2^0} ds <\infty
\end{equation}
and
\begin{equation}
\int_0^t ||Af(s)||^2_{L_2^0} ds <\infty
\end{equation}
then $\int_0^t f(s) dw(s) \in D(A)$ and $A\int_0^t f(s) dw(s) = \int_0^t Af(s) dw(s)$ $\mathbb{P}$-a.s. 
\label{lemmastrongsolution}
\end{lemma}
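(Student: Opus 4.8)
The plan is to reduce the assertion — that the closed operator $A$ may be moved through the stochastic integral — to the elementary fact that \emph{bounded} operators commute with the stochastic integral, combined with the closedness of $A$ expressed through its adjoint. Since $A$ generates a $C_0$-semigroup on the Hilbert space $\mathcal{X}$, it is densely defined and closed, hence $A^{**}=A$; equivalently, a vector $x\in\mathcal{X}$ belongs to $D(A)$ with $Ax=y$ if and only if $\psh{x}{A^*z}_{\mathcal{X}}=\psh{y}{z}_{\mathcal{X}}$ for every $z\in D(A^*)$. The two integrability hypotheses guarantee that both stochastic integrals $\int_0^t f(s)\,dw(s)$ and $\int_0^t Af(s)\,dw(s)$ are well defined as elements of $L^2(\Omega;\mathcal{X})$.

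First I would fix $z\in D(A^*)$ and use the standard fact that the stochastic integral commutes with bounded operators: $C\int_0^t g(s)\,dw(s)=\int_0^t C\circ g(s)\,dw(s)$ for any $C\in\pazocal{L}(\mathcal{X},\mathbb{K})$ and any admissible integrand $g$. Applying this to $C=\psh{\cdot}{A^*z}_{\mathcal{X}}$ and $g=f$ gives $\psh{\int_0^t f\,dw}{A^*z}_{\mathcal{X}}=\int_0^t\big(\psh{\cdot}{A^*z}_{\mathcal{X}}\circ f(s)\big)\,dw(s)$, a scalar stochastic integral. Now the assumption $f(s)Q^{1/2}(Z)\subset D(A)$ lets me rewrite, pointwise in $s$ and on each $h\in Q^{1/2}(Z)$, $\psh{f(s)h}{A^*z}_{\mathcal{X}}=\psh{Af(s)h}{z}_{\mathcal{X}}$, so that $\psh{\cdot}{A^*z}_{\mathcal{X}}\circ f(s)=\psh{\cdot}{z}_{\mathcal{X}}\circ Af(s)$ as Hilbert--Schmidt maps (using that $Af(s)\in L_2^0$ by the second hypothesis). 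Integrating and pulling the bounded functional $\psh{\cdot}{z}_{\mathcal{X}}$ back out — legitimate because $\int_0^t Af\,dw$ is well defined — I obtain
\begin{equation*}
\psh{\int_0^t f(s)\,dw(s)}{A^*z}_{\mathcal{X}}=\psh{\int_0^t Af(s)\,dw(s)}{z}_{\mathcal{X}}.
\end{equation*}

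The last step is to make the exceptional $\mathbb{P}$-null set independent of $z$. Since $\mathcal{X}$ is separable, $D(A^*)$ equipped with the graph norm of $A^*$ is separable; choosing a countable graph-dense family $\{z_j\}_{j\in\mathbb{N}}\subset D(A^*)$, the displayed identity holds simultaneously for all $j$ off a single $\mathbb{P}$-null set, and both sides depend continuously on $z$ in the graph norm of $A^*$, so the identity extends to every $z\in D(A^*)$ on that full-measure event. By the adjoint characterization recalled above, this is exactly the statement $\int_0^t f(s)\,dw(s)\in D(A)$ and $A\int_0^t f(s)\,dw(s)=\int_0^t Af(s)\,dw(s)$, $\mathbb{P}$-a.s.

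A route closer to \citep[Proposition 4.30]{daprato} would instead approximate $f$ in $\mathcal{N}^2_w([0,t];L_2^0)$ by elementary integrands $f_n=\sum_k\mathbbm{1}_{(t_k,t_{k+1}]}a_k$ with $a_kQ^{1/2}(Z)\subset D(A)$ and with $Af_n\to Af$ in $\mathcal{N}^2_w([0,t];L_2^0)$; for elementary integrands the commutation is trivial by linearity, It\^o's isometry then forces $\int_0^t f_n\,dw\to\int_0^t f\,dw$ and $\int_0^t Af_n\,dw\to\int_0^t Af\,dw$ in $L^2(\Omega;\mathcal{X})$, and passing to an almost surely convergent subsequence and using that $A$ is closed finishes the proof. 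In either approach the delicate point is the interchange of $A$ with the stochastic integral: in the first it is the combination of the scalar commutation with the separability argument that produces a common null set, and in the second it is the construction of $D(A)$-valued elementary approximations converging simultaneously for $f$ and $Af$; the two integrability hypotheses are precisely what make these constructions (and the well-definedness of $\int_0^t Af\,dw$) possible.
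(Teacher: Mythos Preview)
The paper does not supply a proof of this lemma: it is quoted verbatim as \citep[Proposition 4.30]{daprato} and invoked in Appendix~\ref{appendixB} as a black box. So there is no ``paper's own proof'' to compare against; your proposal stands on its own merits.

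Your primary argument via the adjoint characterization is correct. The key ingredients --- that a generator of a $C_0$-semigroup is closed and densely defined, hence $A^{**}=A$; that bounded linear functionals commute with the stochastic integral; and that the pointwise identity $\psh{f(s)h}{A^*z}=\psh{Af(s)h}{z}$ on $Q^{1/2}(Z)$ transfers to an identity of Hilbert--Schmidt operators --- are all sound, and the separability argument to obtain a common null set is the standard way to close such proofs. Your second route, via elementary approximants converging simultaneously in the $f$- and $Af$-norms together with closedness of $A$, is indeed the argument Da~Prato and Zabczyk give, so you have correctly identified both a valid alternative and the original source's method. One small remark: strictly speaking the lemma only requires $A$ to be closed (and, for your adjoint route, densely defined), not that it generate a semigroup; in the paper's setting this is of course automatic, but your opening sentence slightly overstates the hypothesis needed.
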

First, we prove that $X_\lambda^e(t)$ belongs to $D(A^e)$. By assumption, $X^e_0\in D(A^e)$, then $T^e(t)X_0\in D(A)$ by the first part of \citep[Theorem 5.2.2]{zwart}. Moreover, for any $\tilde{u}\in L^2_\mathbb{F}([0,T];\mathbb{K}^m)$, once more from \citep[Theorem 5.2.2]{zwart}, we obtain that $\int_0^t T^e(t-s) B^e \tilde{u}(s)$ $ds \in D(A^e)$. Eventually, by Lemma \ref{lemmastrongsolution}, we find that $\int_0^t T(t-s) H dw(s) \in D(A)$ and that $A\int_0^t T(t-s) Hdw(s) = \int_0^t AT(t-s) Hdw(s)$. This entails that $\int_0^t T^e(t-s)H^edw(s)\in D(A^e)$ and thus concludes the proof for $X^e_\lambda(t)\in D(A^e)$. We are now going to prove that it satisfies (\ref{integralequation}). By assumption, $\int_0^t A^e T^e(t-s) H^e dw(s)$ is well-defined and integrable, then the stochastic Fubini Theorem \citep[Theorem 4.33]{daprato} and \citep[Theorem 5.2.2]{zwart} entail that
\begin{align}
\int_0^t \int_0^s A^eT^e(s-v) H^e dw(v) ds &= \int_0^t \int_v^t A^e T^e(s-v) H^e ds dw(v) \nonumber\\
&= \int_0^t  T^e(t-v) H^e dw(v) - \int_0^t  H^e dw(v).
\label{one} 
\end{align}
Moreover, by stochastic Fubini Theorem once again, 
\begin{align}
\int_0^t \int_0^s A^eT^e(s-v) B_\lambda^e \tilde{u}(v) dv ds &= \int_0^t \int_v^t A^eT^e(s-v) B_\lambda^e \tilde{u}(v) ds dv    \nonumber\\ 
&=\int_0^t  T^e(t-v) B_\lambda^e \tilde{u}(v) dv - \int_0^t B_\lambda^e \tilde{u}(v)  dv.
\label{two}
\end{align}
Since $X^e_\lambda(t)$ is given by
\begin{equation*}
X_\lambda^e(t)= T^e(t)X^e_0 + \int_0^t T^e(t-s) B_\lambda^e \tilde{u}(s) ds + \int_0^t T^e(t-s) H^edw(s),
\end{equation*} 
by applying the operator $A^e$ to both sides and by integrating on $[0,t]$, we get that
\begin{equation}
\begin{aligned}
\int_0^t A^eX_\lambda^e(s) ds = \int_0^t A^eT^e(s) X^e_0 ds + \int_0^t \int_0^s A^e T^e(s-v) H^edw(v) ds
+ \int_0^t \int_0^s A^e T^e(s-v) B_\lambda^e \tilde{u}(v) dv ds. 
\end{aligned}
\end{equation}
Using the relations (\ref{one}) and (\ref{two}), it follows that 
$$X^e_0 + \int_0^t A^e X^e_\lambda(s) ds = X^e_\lambda(t) - \int_0^t H^e dw(s) - \int_0^t B_\lambda^e \tilde{u}(s) ds,$$
which means that $X^e_\lambda(t)$ satisfies the integral equation (\ref{integralequation}).


\bibliographystyle{plain}
\bibliography{biblio_stochastic}  

\end{document}